\documentclass[aos,authoryear]{imsart}

\RequirePackage{amsthm,amsmath,amsfonts,amssymb}
\RequirePackage[numbers]{natbib}
\RequirePackage[colorlinks,citecolor=blue,urlcolor=blue]{hyperref}
\RequirePackage{graphicx}

\usepackage{amssymb}

\usepackage{dsfont}

\usepackage{mathabx}

\usepackage{graphicx}

\usepackage{hyperref}

\usepackage{xcolor}

\usepackage[autostyle]{csquotes}  

\usepackage{prodint}

\usepackage{enumitem}

\startlocaldefs
\theoremstyle{plain}

\newtheorem{theorem}{Theorem}[section]
\newtheorem{lemma}[theorem]{Lemma}
\theoremstyle{remark}
\newtheorem{definition}[theorem]{Definition}
\newtheorem{proposition}[theorem]{Proposition}

\newtheorem{assumption}{Assumption}
\newtheorem*{example}{Example}
\newtheorem{remark}[theorem]{Remark}


\newcommand\independent{\protect\mathpalette{\protect\independenT}{\perp}}
\def\independenT#1#2{\mathrel{\rlap{$#1#2$}\mkern2mu{#1#2}}}

\usepackage{mathbbol}

\DeclareSymbolFontAlphabet{\amsmathbb}{AMSb}%

\endlocaldefs

\begin{document}

\begin{frontmatter}
\title{Conditional Aalen--Johansen estimation}
\runtitle{Conditional Aalen--Johansen estimation}

\begin{aug}
\author[A]{\fnms{Martin}~\snm{Bladt}\ead[label=e1]{martinbladt@math.ku.dk}}
\and
\author[A]{\fnms{Christian}~\snm{Furrer}\ead[label=e2]{furrer@math.ku.dk}}
\address[A]{Department of Mathematical Sciences, University of Copenhagen \\
Universitetsparken 5, DK-2100 Copenhagen, Denmark \\[2mm] \printead[presep={}]{e1,e2}}

\end{aug}

\begin{abstract}
The conditional Aalen--Johansen estimator, a general-purpose non-parametric estimator of conditional state occupation probabilities, is introduced. The estimator is applicable for any finite-state jump process and supports conditioning on external as well as internal covariate information. The conditioning feature permits for a much more detailed analysis of the distributional characteristics of the process. The estimator reduces to the conditional Kaplan--Meier estimator in the special case of a survival model and also englobes other, more recent, landmark estimators when covariates are discrete. Strong uniform consistency and asymptotic normality are established under lax moment conditions on the multivariate counting process, allowing in particular for an unbounded number of transitions.
\end{abstract}

\begin{keyword}[class=MSC]
\kwd[Primary ]{62N02}
\kwd[; secondary ]{62G20}
\kwd[; secondary ]{60J74}
\end{keyword}

\begin{keyword}
\kwd{Conditional occupation probabilities}
\kwd{Non-Markov jump process}
\kwd{Kernel regression}
\kwd{Right-censoring}
\end{keyword}

\end{frontmatter}

\section{Introduction}

Jump processes are a fundamental class of stochastic processes that have been widely used in statistics to model dynamical systems evolving over time, such as population dynamics and credit risk and, more broadly, in survival and event history analysis~\citep{Hoem1972,JarrowLandoTurnbull1997,AndersenBorganGillKeiding1993}. Markov jump processes on finite state spaces have received by far the most attention due their flexibility and tractability. Non-parametric estimation of their hazards and transition probabilities is available through the celebrated Nelson--Aalen and Aalen--Johansen estimators~\citep{Aalen1978,AalenJohansen1978}. Although parametric and semi-parametric models are more popular for quantifying distributional characteristics, the Aalen--Johansen estimator remains the benchmark tool for, among other things, visualizing state occupation probabilities.

However, in most applications, the Markov assumption is violated. Another important class of stochastic processes that have received widespread attention are semi-Markov jump processes~\citep{KorolyukBrodiTurbin1975,JanssenLimnios1999}. These models take into account the duration of time spent in the current state since entry, which can have important implications for the dynamics of the system being modeled. For example, in many real-world systems, the length of time spent in a particular state may be influenced by external factors or internal dynamics, which in turn can affect the probability of transitioning to a new state. Incorporating duration effects, or other forms of dependence that violate the Markov assumption, may therefore provide a more accurate and nuanced representation of the underlying system.

Semi-Markov jump processes have been used extensively in the analysis of a wide range of systems, including queuing systems~\citep{Fabens1961}, and in reliability, credit risk, and epidemiology modeling~\citep{LimniosOprisan2001,VasileiouVassiliou2006,Commenges1999}. The past trajectory of the process also plays an important role in biostatistics and related fields, where the duration of a treatment or disease progression may be critical, confer for instance with~\cite{Scheike2001},~\cite{Janssen1966}, and~\cite{Hoem1972}.

Although non-parametric estimation of homogeneous semi-Markov models is well-understood~\citep{LagakosSommerZelen1978}, the inhomogeneous case is more involved. This has led to different approaches, of which we highlight random time changes for progressive models according to~\cite{VoelkelCrowley1984}. {More recently, so-called landmark estimation~\citep{Houwelingen2007,PutterSpitoni2018}}, which applies to general non-Markovian models, has gained significant traction. Here the basic idea is to sub-sample on a discrete event of interest, typically the state of the process at an earlier point in time, and then construct classic estimators for the transition probabilities. Consistency of such estimators has been partly explored. For instance, weak uniform consistency using martingale techniques was established in~\cite{Beyersmann2023}. Under a stringent assumption of a bounded number of transitions, an asymptotic normality result is provided in~\cite{Glidden2002}.

An alternative to landmarking is to shift focus away from the occupation probabilities and target the infinitesimal distributional characteristics directly, for which kernel-based methods have played an instrumental role, for instance in~\cite{McKeagueUtikal1990},~\cite{NielsenLinton1995}, and~\cite{Dabrowska1997}. This strain of literature targets the compensator of the counting process by using kernels to sub-sample dynamically. A rather important limitation of this approach is that the temporal dependence structure of the jump process has to be specified in advance, taking away from a fully non-parametric method. Of course, if one is willing to adopt a particular dependence structure, these methods offer favorable estimation properties based on martingale techniques. A more subtle drawback is that computation of transition and occupation probabilities may become rather involved, depending on the dependence structure.

{
Using elements from both the kernel-based methods and landmarking, this paper proposes a non-parametric estimator of state occupation probabilities of an arbitrary jump process, while also allowing for the inclusion of covariates and right-censoring. We refer to this estimator as the conditional Aalen--Johansen estimator. As a means to obtain such an estimator, we also introduce the conditional Nelson--Aalen estimator, retaining the sound principle that the (conditional) Aalen--Johansen estimator is the product integral of the (conditional) Nelson--Aalen estimator. The latter estimator targets a certain matrix function, which can be thought of as an averaged compensator, and which in the Markov case reduces to the usual hazard matrix. This intermediate step, which is also found in landmark estimation, sets our approach apart from others based on martingale methods; it may be conceptually artificial, but it is mathematically effective.

To accommodate for conditioning, the usual empirical counts and at-risk indicators are replaced by estimators of Nadaraya--Watson type, and the conditional Aalen--Johansen estimator reduces to the conditional Kaplan--Meier (Beran) estimator~\citep{Beran1981,Dabrowska1989} in case of a survival model. To obtain estimators of transition probabilities or other dynamical quantities that may arise in, for instance, inhomogenous semi-Markov models, one can condition on internal covariates. Our specification allows for both continuous and discrete conditioning, which opens up the possibility of conditioning on events that directly depend on time or duration. These applications usually only require uni- or bi-dimensional covariate spaces, for which kernel-based methods do not suffer from their well-known weakness: the curse of dimensionality. Naturally, when conditioning on discrete random variables, the estimator is reduced to the landmark Aalen--Johansen estimator.}

Apart from the additional flexibility of the estimator, the main theoretical contributions of the paper are as follows. We provide unifying proofs of strong uniform consistency of our estimators. We also establish asymptotic normality using rather general moment conditions on the associated multivariate counting process (in particular, allowing for an unbounded number of transitions). The results are established using the theory of empirical processes, and they have as special cases many of the previous, empirical process-based, results in the literature. It is unclear whether martingale techniques offer an alternative route, though the cost of using classic empirical process theory is that the asymptotics are not established over the `maximal interval' of the censoring distribution, but rather only up to compact subsets.

The conditional Nelson--Aalen and Aalen--Johansen estimators are implemented in the R package~\cite{BladtFurrer2023}. In the accompanying Supplementary Material, we provide a simulation study, where the jump process follows an inhomogeneous semi-Markov model. Here, the practical versatility of the conditional Aalen--Johansen estimator is showcased, in particular its ability to recover probabilities that depend on the duration spent in the current state.

The rest of the paper is organized as follows. In Section~\ref{sec:estimator}, we introduce the conditional Nelson--Aalen and Aalen--Johansen estimators. Uniform consistency is proven in Section~\ref{sec:consistency}, while Section~\ref{sec:normality} concerns asymptotic normality. Subsection~\ref{subsec:prelim}, which is devoted to the finite dimensional distributions, is followed by Subsection~\ref{subsec:weak}, where weak convergence is established, and Subsection~\ref{subsec:plugin}, which is devoted to plug-in estimation for the covariances. Proofs may be found in Appendices~\ref{appA} through~\ref{appC}, while Appendix~\ref{ap:technical} concerns the bracketing central limit theorem.

\section{Conditional Aalen--Johansen estimator}\label{sec:estimator}

In this section, we introduce necessary notation and define the {quantities} of interest, namely the conditional occupation probabilities, as well as their kernel-based estimators. To accommodate right-censoring, we cast the occupation probabilities as a product integral of a certain auxiliary matrix function, which in the Markov case reduces to the usual hazard matrix function; this is in accordance with the landmarking literature. {We stress that the auxiliary matrix function at best may be understood as an averaged compensator of the counting process, but otherwise has no direct interpretation.} Estimators of the auxiliary matrix function and the occupation probabilities are constructed, adopting the smoothing approach of~\cite{NielsenLinton1995} to the present setup.

\subsection{Setting}\label{subsec:estimator_setting}

Let $Z=(Z_t)_{t\geq0}$ be a non-explosive pure jump process on a finite state space $\mathcal{Z}$, and take $(\Omega,\mathcal{F},\amsmathbb{P})$ to be the underlying probability space. For ease of notation, we require $\mathcal{Z} \subset \amsmathbb{R}$. Denote by $\tau$ the possibly infinite absorption time of $Z$.

Let $X$ be a random variable with values in $\amsmathbb{R}^d$ equipped with the Borel $\sigma$-algebra. We assume that the distribution of $X$ admits a density $g$ with respect to the Lebesgue measure $\lambda$. Later, in Section~\ref{sec:consistency}, we discuss the relaxation of absolute continuity to the case of known atoms.

We introduce a multivariate counting process $N$ with components $N_{jk}=(N_{jk}(t))_{t\geq0}$ given by
\begin{align*}
N_{jk}(t) = \# \{s \in (0,t] : Z_{s-} = j, Z_s = k\}, \quad t\geq0,
\end{align*}
for $j,k \in \mathcal{Z}$, $j\neq k$.

The following natural moment condition plays a major role in the sequel. It particularly implies the previously stated non-explosion of $Z$.
\begin{assumption}\label{ass:moment_N}
For some $\delta>0$, it holds that
\begin{align*}
\amsmathbb{E}\big[N_{jk}(t)^{1+\delta}\big] < \infty.
\end{align*}
\end{assumption}
Define the {conditional} occupation probabilities according to
\begin{align*}
p_j(t | x)
=
\amsmathbb{E}[\mathds{1}_{\{Z_t = j\}} \, | \, X = x].
\end{align*}
We denote by $p(t | x)$ the row vector with elements $p_j(t | x)$. We now introduce the \textit{cumulative conditional transition rates} and cast the conditional occupation probabilities as a product integral of these cumulative rates. To this end, let
\begin{align*}
p_{jk}(t | x)
=
\amsmathbb{E}[N_{jk}(t) \, | \, X = x],
\end{align*}
and define the cumulative conditional transition rates {as the matrix function $\Lambda(\cdot|x)$ with entries}
\begin{align*}
\Lambda_{jk}(t | x) = \int_0^t \frac{1}{p_j(s- | x)} p_{jk}(\mathrm{d}s | x){, \quad \Lambda_{jj}(t | x) = -\sum_{k:k\neq j}\Lambda_{jk}(t | x).}
\end{align*}
{Invoking the law of iterated expectations and employing heuristic notation, we see that
\begin{align*}
\Lambda_{jk}(\mathrm{d}t | x) = \amsmathbb{E}\big[\amsmathbb{E}\big[N_{jk}(\mathrm{d}t) \, \big| \, \mathcal{G}^{Z,X}_{t-}\big] \, \big| \, Z_{t-} = j, X = x\big],
\end{align*}
where $\mathcal{G}^{Z,X}_t := \sigma(X,Z_s : s \leq t)$. The inner quantity is the $(\mathcal{G},\amsmathbb{P})$-compensator of $N_{jk}$. Therefore, we may interpret the cumulative conditional transition rate as a sort of conditionally averaged compensator.

If $Z$ conditional on $X = x$ is Markov with suitably regular transition rates $\mu(\cdot | x)$, then
\begin{align*}
\amsmathbb{E}\big[N_{jk}(\mathrm{d}t) \, \big| \, \mathcal{G}^{Z,X}_{t-}\big] = \mathds{1}_{\{Z_{t-} = j\}} \mu(t \, | \, X) \, \mathrm{d}t
\end{align*}
and hence
\begin{align*}
\Lambda_{jk}(t | x) = \int_0^t \mu(s | x) \, \mathrm{d}s,
\end{align*}
which helps explain the terminology \textit{cumulative conditional transition rate}. In this case, the (conditional) transition probabilities uniquely solve Kolmogorov's forward differential equations or, equivalently, may be cast as the product-limit
\begin{align}\label{eq:trans_probs}
\amsmathbb{P}(Z_t = k \, | \, Z_s = j, X = x) = \bigg[\Prodi_s^t \big(\text{Id} + \Lambda(\mathrm{d}s | x)\big)\bigg]_{jk}.
\end{align}
If $Z$ is not Markov, then~\eqref{eq:trans_probs} is not attainable. However, if $\Lambda_{jk}(t {\, | \, x}) < \infty$ (which we shall assume throughout) it still holds that
\begin{align}\label{eq:prod_int_rep}
p(t|x) = p(0|x)  \Prodi_0^t \big(\text{Id} + \Lambda(\mathrm{d}s | x)\big).
\end{align}
In particular, estimation of transition probabilities must be based on conditining with internal covariates. The identity~\eqref{eq:prod_int_rep} has been established in~\cite{Overgaard2019b}, but subject to a somewhat artificial technical condition. It suffices, however, to assume that $\Lambda_{jk}(t {\, | \, x}) < \infty$, as we shall now demonstrate.

The assumption that $\Lambda$ is finite implies that the product integral
\begin{align*}
q(t|x) = \Prodi_0^t \big(\text{Id} + \Lambda(\mathrm{d}s | x)\big)
\end{align*}
is well-defined. Therefore, the forward integral equation
\begin{align*}
q(t|x)
&=
q(0|x)
+
\int_0^t q(s-|x) \Lambda(\mathrm{d}s)
\end{align*}
holds, which again implies that
\begin{align*}
p(0|x)q(t|x)
&=
p(0|x)q(0|x)
+
\int_0^t p(0|x)q(s-|x) \Lambda(\mathrm{d}s).
\end{align*}
However, from the identity
\begin{align*}
\mathds{1}_{\{Z_t = j\}} = \mathds{1}_{\{Z_0 = j\}} + \sum_{k \in \mathcal{Z}\atop k \neq j} \big( N_{kj}(t) - N_{jk}(t) \big)
\end{align*}
and the definition of $p$ and $\Lambda$ we also conclude that
\begin{align*}
p(t|x)
&=
p(0|x)
+
\int_0^t p(s-|x) \Lambda(\mathrm{d}s).
\end{align*}
Consequently, both $p(0|x)q(\cdot|x)$ and $p(\cdot|x)$ solve the same integral equation. To verify~\eqref{eq:prod_int_rep}, it thus suffices to establish the uniqueness of solutions to this class of integral equations. This, however, follows from a suitable application of the Banach fixed-point theorem, compare also with~\cite{BathkeChristiansen2024}.}

We are interested in the situation where the observation of $Z$ is right-censored. To this end we introduce a strictly positive random variable $R$ describing right-censoring, so that we actually observe the triplet
\begin{align*}
\big(X, (Z_t)_{0 \leq t \leq R}, \tau \wedge R\big).
\end{align*}
In the following, we fix $x\in\amsmathbb{R}^d$ and focus on a compact time interval $[0,\theta_x]$ for which the right endpoint is required to satisfy that there exists $\delta>0$ such that
\begin{align}\label{eq:right_endpoint}
\theta_x < \inf_{z \in \amsmathbb{R}^d : \lVert z - x \rVert \leq \delta, g(z)>0} \, \inf_{0\leq t < \infty} \{\amsmathbb{P}(R\leq t | X = z)=1 \}.
\end{align}
{That is, we choose $\theta_x$ such that locally around $x$ it is below the right endpoint of the supports of the conditional distributions of $R$ given $X$.}

Whenever the Markov assumption fails, the following assumption is key.
\begin{assumption}\label{ass:independence}
Right-censoring is conditionally entirely random:
\begin{align*}
R \underset{X}{\independent} Z.
\end{align*}
\end{assumption}
{In this paper, we focus on a single sampling effect, namely right-censoring. Left-truncation is also a frequently encountered phenomena, and although a general treatment is outside the scope of this paper, we would like to briefly comment on its inclusion. In the absence of covariates, it has been shown in~\cite{Beyersmann2023} that the assumption of entirely randomness is key. We would expect this to essentially remain the case in the present setting, with independence replaced by conditional independence given $X$.}

We introduce
\begin{align*}
p_j^{\texttt{c}}(t | x)
&=
\amsmathbb{E}[\mathds{1}_{\{Z_t = j\}}\mathds{1}_{\{t < R\}} \, | \, X = x], \\
p_{jk}^{\texttt{c}}(t | x)
&=
\amsmathbb{E}[N_{jk}(t \wedge R) \, | \, X = x].
\end{align*}
It follows from Assumption~\ref{ass:independence} that
\begin{align*}
p_j^{\texttt{c}}(t | x)
&=
\amsmathbb{P}(t < R \, | \, X = x) p_j(t|x), \\
p_{jk}^{\texttt{c}}(t | x)
&=
\int_0^t \amsmathbb{P}(s {\leq} R \, | \, X = x) \, p_{jk}(\mathrm{d}s | x),
\end{align*}
from which we may conclude that
\begin{align}\label{eq:Lambda_re}
\Lambda_{jk}(t | x) = \int_0^t \frac{1}{p_j^{\texttt{c}}(s- | x)} p_{jk}^{\texttt{c}}(\mathrm{d}s | x){.}
\end{align}
\begin{remark}
If $Z$ is Markov, then one may forego Assumption~\ref{ass:independence}. Instead, assuming the weaker condition that the compensator of the multivariate counting process remains unchanged upon addition of censoring information, confer also with Subsection~III.2.2 in~\cite{AndersenBorganGillKeiding1993}, we obtain the same conclusion:
\begin{align*}
\int_0^t \frac{1}{p_j^{\texttt{c}}(s- | x)} p_{jk}^{\texttt{c}}(\mathrm{d}s | x) 
&=
\int_0^t \frac{1}{p_j^{\texttt{c}}(s- | x)} \mathrm{d}\amsmathbb{E}[\mathds{1}_{\{s \leq R\}}\mathds{1}_{\{Z_{s-}=j\}} \Lambda_{jk}(s|x) \, | \, X = x] \\
&=
\int_0^t \frac{p_j^{\texttt{c}}(s- | x)}{p_j^{\texttt{c}}(s- | x)} \Lambda_{jk}(\mathrm{d}s|x) 
=
\Lambda_{jk}(t|x).
\end{align*}
\end{remark}

\subsection{Estimators}\label{subsec:estimator_estimator}

Consider i.i.d.\ replicates $\big(X^\ell, (Z_t^\ell)_{0 \leq t \leq R^\ell}, \tau^\ell \wedge R^\ell\big)_{\ell = 1}^n$. The aim now is to construct an estimator for the conditional occupation probabilities $p_j(t | x)$, but from \eqref{eq:prod_int_rep}, it essentially suffices to estimate the cumulative conditional transition rates $\Lambda_{jk}(t | x)$, which we do using Equation~\eqref{eq:Lambda_re}.

{Let $K_i$ be { kernel functions of bounded variation with bounded support (hence they are bounded)}, and let $(a_n)$ be a band sequence. We introduce the usual kernel estimator for the density $g$:
\begin{align*}
\mathbb{g}^{(n)}(x)
=
\frac{1}{n}
\sum_{\ell = 1}^n \mathbb{g}^{(n,\ell)}(x)
=
\frac{1}{n}
\sum_{\ell = 1}^n
\prod_{i=1}^d \frac{1}{a_n}  K_i\Big(\frac{x_i - X_i^\ell}{a_n}\Big).
\end{align*}
Based on the kernel estimator for $g$, we form the following Nadaraya--Watson type kernel estimators:
\begin{align*}
\amsmathbb{N}^{(n)}_{jk}(t | x)
&=
\frac{1}{n} \sum_{\ell = 1}^n \amsmathbb{N}^{(n,\ell)}_{jk}(t | x) = \frac{1}{n} \sum_{\ell=1}^n \frac{N^\ell_{jk}(t \wedge R^\ell) \mathbb{g}^{(n,\ell)}(x)}{\mathbb{g}^{(n)}(x)}, \\
\amsmathbb{I}_j^{(n)}(t | x)
&=
\frac{1}{n} \sum_{\ell = 1}^n \amsmathbb{I}_j^{(n,\ell)}(t | x) = \frac{1}{n} \sum_{\ell=1}^n \frac{\mathds{1}_{\{t < R^\ell\}} \mathds{1}_{\{Z^\ell_t = j\}} \mathbb{g}^{(n,\ell)}(x)}{\mathbb{g}^{(n)}(x)}.
\end{align*}
We are now ready to introduce the conditional Nelson--Aalen and Aalen--Johansen estimators.}
\begin{definition}
The \textit{conditional Nelson--Aalen estimator} for the cumulative conditional transition rates is defined as { the matrix function $\mathbb{\Lambda}^{(n)}(\cdot | x)$ with entries}
\begin{align}\label{eq:canonical_nelson_aalen}
\mathbb{\Lambda}_{jk}^{(n)}(t | x) = \int_0^t  \frac{1}{\amsmathbb{I}_j^{(n)}(s- | x)} \amsmathbb{N}_{jk}^{(n)}(\mathrm{d}s | x).
\end{align}
\end{definition}
\begin{definition}
The \textit{conditional Aalen--Johansen estimator} for the conditional occupation probabilities is defined according~\eqref{eq:prod_int_rep}, but with the cumulative conditional transition rates replaced by the conditional Nelson--Aalen estimator, that is
\begin{align*}
\mathbb{p}^{(n)}(t | x) = \mathbb{p}^{(n)}(0 | x) \Prodi_0^t \big(\text{Id} + \mathbb{\Lambda}^{(n)}(\mathrm{d}s | x)\big),
\end{align*}
where $\mathbb{p}_j^{(n)}(0 | x) = \amsmathbb{I}_j^{(n)}(0 | x)$.
\end{definition}
Even in case of no covariates, and under the assumption that $Z$ is Markovian, expressions such as {\eqref{eq:Lambda_re} and} \eqref{eq:canonical_nelson_aalen} are problematic, since one might be dividing by zero. It is well-understood that this issue cannot be directly resolved, but needs to be circumvented. Typically, this is done via assumptions on the local behavior of the jump process, confer with~\cite{Aalen1978} and Subsection~IV.1.2 in~\cite{AndersenBorganGillKeiding1993}. We adopt a different approach, changing instead the targets of estimation by means of perturbation. To this end, for $\varepsilon>0$ we consider the estimator
\begin{align*}
\mathbb{\Lambda}_{jk}^{(n,\varepsilon)}(t | x) = \int_0^t  \frac{1}{\amsmathbb{I}_j^{(n)}(s- | x) \vee \varepsilon} \amsmathbb{N}_{jk}^{(n)}(\mathrm{d}s | x),
\end{align*}
which targets the perturbed cumulative conditional transition rates given by
\begin{align*}
\Lambda_{jk}^{(\varepsilon)}(t | x) &= \int_0^t \frac{1}{p_j^{\texttt{c}}(s- | x) \vee \varepsilon}  p_{jk}^{\texttt{c}}(\mathrm{d}s | x) < \infty.
\end{align*}
In similar fashion, we consider the associated estimator
\begin{align*}
\mathbb{p}^{(n,\varepsilon)}(t | x)
=
\mathbb{p}^{(n)}(0 | x) \Prodi_0^t \big(\text{Id} + \mathbb{\Lambda}^{(n,\varepsilon )}(\mathrm{d}s | x)\big),
\end{align*}
which targets the perturbed conditional occupation probabilities given by
\begin{align*}
p^{(\varepsilon)}(t | x)
=
p(0 | x) \Prodi_0^t \big(\text{Id} + \Lambda^{(\varepsilon )}(\mathrm{d}s | x)\big).
\end{align*}
Observe that, with $0=t_0<t_1<t_2<\cdots$ a sequence of time points that describes all observed jump times in the sample,
\begin{align*}
&\mathbb{p}^{(n,\varepsilon)}_j(t_{\ell+1} |x)-\mathbb{p}^{(n,\varepsilon)}_j(t_{\ell} |x)\\
&=\sum_{\substack{k \in \mathcal{Z} \\ k \neq j}} \mathbb{p}^{(n,\varepsilon)}_k(t_{\ell} | x)\Delta\mathbb{\Lambda}_{kj}^{(n,\varepsilon)}(t_{\ell +1} | x) -\mathbb{p}^{(n,\varepsilon)}_j(t_{\ell} | x) \sum_{\substack{k \in \mathcal{Z} \\ k \neq j}} \Delta\mathbb{\Lambda}_{jk}^{(n,\varepsilon)}(t_{\ell +1} | x)
\end{align*}
and starting value $\mathbb{p}^{(n)}(0 | x) = \amsmathbb{I}^{(n)}(0 | x)$.

Whenever the non-zero values of the conditional occupation probabilities are bounded away from zero, any asymptotic results for the perturbed estimators and their targets carry over to the original estimators and their targets. This would, for instance, be guaranteed by Assumption~6.1 in~\cite{Aalen1978}. The results also carry over locally when boundedness away from zero is only guaranteed on certain subsets of the interior of the support of $R$. 

\begin{example}
{Suppose $\theta_x$ satisfies~\eqref{eq:right_endpoint}}, and suppose that the cumulative conditional transition rate $\Lambda_{jk}(\cdot | x)$ is piece-wise constant with discontinuities $0 = t_0 < t_1 < \cdots < t_M \leq {\theta_x}$. {This means that conditional on $X = x$ and between time zero and $\theta_x$, the process $Z$ may only jump at the specific time points $t_0,t_1,\ldots,t_M$ (discrete model).} Then for
\begin{align*}
\varepsilon = \amsmathbb{P}({\theta_x} < R \, | \, X = x) \hspace{-4mm} \prod_{i=1,\ldots,M\atop p_j(t_{i-1} \, | \, x)>0} \hspace{-4mm} p_j(t_{i-1} \, | \, x) > 0
\end{align*}
it holds that $\Lambda_{jk}^{(\varepsilon)}(t | x) = \Lambda_{jk}(t | x)$ on $[0,{\theta_x}]$.
\end{example}

\begin{example}
Set $\Lambda_{jj}(\cdot|x) = -\sum_{k: k \neq j} \Lambda_{jk}(\cdot|x)${, and suppose that this function is continuous; for the general case, see Lemma~8 in~\cite{Overgaard2019b}}. {Suppose $\theta_x$ satisfies~\eqref{eq:right_endpoint}}, and suppose that $p_j(t \, | \, x) > 0$ for some $0 \leq t < {\theta_x}$. Then $p_j(s \, | \, x) \geq p_j(t \, | \, x) e^{\Lambda_{jj}({\theta_x} | x)-\Lambda_{jj}(t | x)}$ for $t < s \leq {\theta_x}$, and hence for 
\begin{align*}
\varepsilon = p_j(t \, | \, x) e^{\Lambda_{jj}({\theta_x} | x)-\Lambda_{jj}(t | x)} \amsmathbb{P}({\theta_x} < R \, | \, X = x) > 0
\end{align*}
it holds that $\Lambda_{jk}^{(\varepsilon)}(s | x) - \Lambda_{jk}^{(\varepsilon)}(t | x) = \Lambda_{jk}(s | x) - \Lambda_{jk}(t | x)$ on $(t,{\theta_x}]$.
\end{example}

{
\begin{remark}
The above examples highlight that the choice of $\varepsilon$ is immaterial whenever the underlying probabilistic model has occupation probabilities far enough from zero, in which case we may actually take $\varepsilon=0$. In general, the underlying probabilistic model is unknown and hence a choice must be made. However, the estimators are always well defined for $\varepsilon=0$, and it is recommended to use this value, since it is only in the limiting case that $\varepsilon>0$ plays a role. 

In the classic absolutely continuous Markovian case, there is an alternative approach to handling division by zero based on assumptions tailored to the application of martingale theory, specifically Lenglart's inequality (for weak consistency) and Rebolledo's martingale central limit theorem (for asymptotic normality), confer with Subsection~IV.1.2.\ in~\cite{AndersenBorganGillKeiding1993}. Recently, in~\cite{Beyersmann2023}, this approach has been adapted to establish weak consistency even in the absence of Markovianity, but still assuming absolutely continuous jump times. In our situation it may also be possible to establish limit theorems, with the notable exception of strong consistency, along similar lines.
\end{remark}}

\begin{remark}
In general, inverse probability of censoring weighting offers an alternative approach when dealing with covariates and censoring, see~\cite{KoulSusarlaRyzin1981} and~\cite{LaanRobins2003}. In~\cite{MostajabiDatta2013} and~\cite{SiriwardhanaKulasekeraDatta2018}, inverse probability of censoring weighting has been used in simpler setup than ours, for instance considering only progressive jump processes. While inverse probability of censoring weighting allows for added dependence between the jump process and the censoring mechanism, it also requires the specification of a censoring model, which takes away from a fully non-parametric approach.
\end{remark}

{
In the remainder of the paper, we impose the following standard assumption, confer with~\cite{Stute1986}.
\begin{assumption}\label{ass:band_seq}
The band sequence satisfies:
\begin{enumerate}[label = (\alph*)]
\item $a_n \to 0$ as $n \to \infty$
\item $n a_n^d \to \infty$ as $n \to \infty$
\item $\sum_{n=1}^\infty c^r_n < \infty$ for some $r\in(1,1+\delta)$, where $c_n := (n a_n^d)^{-1} \log{n}$.
\end{enumerate}
\end{assumption}
For instance, one may for $(1+\delta)^{-1} < \eta < 1$ take
\begin{align*}
a_n^d \sim \frac{\log{n}}{n^{1-\eta}}.
\end{align*}
It is well-known that 
\begin{align*}
\mathbb{g}^{(n)}(x)\overset{\text{a.s.}}{\to} g(x), \quad n \to \infty,
\end{align*}
and, for any random variable $Y$ with i.i.d.\ replicates $(Y^\ell)$ satisfying $\amsmathbb{E}[|Y|^{1+\delta}] < \infty$, that
\begin{align}\label{eq:nada_conv}
\frac{1}{n}\frac{\sum_{\ell=1}^n Y^\ell\: \mathbb{g}^{(n,\ell)}(x)}{\mathbb{g}^{(n)}(x)} \overset{\text{a.s.}}{\to} \amsmathbb{E}[Y \, | \, X=x],\quad n \to \infty.
\end{align}
\begin{remark}\label{rmk:comp_point}
Due to Assumption~\ref{ass:moment_N}, it follows from~\eqref{eq:nada_conv} that
\begin{align}\label{eq:count_point}
\amsmathbb{N}^{(n)}_{jk}(t | x) \overset{\text{a.s.}}{\to} p_{jk}^{\texttt{c}}(t | x), \quad n \to \infty.
\end{align}
Furthermore, it holds that
\begin{align}\label{eq:indicator_point}
\amsmathbb{I}_j^{(n)}(t | x) \overset{\text{a.s.}}{\to} p_j^{\texttt{c}}(t | x), \quad n \to \infty.
\end{align}
\end{remark}
Due to the nature of the processes involved, we may actually strengthen the point-wise convergence from Remark~\ref{rmk:comp_point} to uniform convergence. The basic idea is first establish uniform consistency for $\amsmathbb{N}^{(n)}_{jk}$ via a Glinveko--Cantelli type argument, and then lift this convergence to first $\amsmathbb{I}^{(n)}_j$ and finally via standard arguments to $\mathbb{\Lambda}^{(n,\varepsilon)}$ and then $\mathbb{p}^{(n,\varepsilon)}$.}

\section{Uniform consistency}\label{sec:consistency}

Extending the consistency results from a survival setting to the multi-state setting comes with a variety of challenges. For instance, the counting process in survival analysis has a single jump, whereas our setting deals with a possibly unbounded number of jumps. The assumption of a bounded number of jumps is not desirable in this context. In particular, it excludes special canonical cases such as Markov and semi-Markov models. Another challenge in the multi-state setting is that exposure is not a monotone process, which requires us to rely heavily on an identity linking exposures to sums of counting processes.

\subsection{Results}\label{subsec:consistency_primary}

To establish uniform consistency of the estimators introduced in Subsection~\ref{subsec:estimator_estimator}, we first directly prove uniform consistency of the components. In a next step, we follow along the lines of Subsection~7.3 in~\cite{ShorackWellner2009} to establish uniform consistency also of the estimators of interest.

Recall {again} that for any jump process we may link sojourns and transitions through the identity
\begin{align*}
\mathds{1}_{\{Z_t = j\}} = \mathds{1}_{\{Z_0 = j\}} + \sum_{k \in \mathcal{Z}\atop k \neq j} \big( N_{kj}(t) - N_{jk}(t) \big).
\end{align*}
In similar fashion, through integration by parts, we may include the right-censoring mechanism and obtain
\begin{align}\label{eq:decomp}
\mathds{1}_{\{Z_t = j\}}\mathds{1}_{\{t < R\}} = \mathds{1}_{\{Z_0 = j\}} - \mathds{1}_{\{R \leq t\}} \mathds{1}_{\{Z_R = j\}} + \sum_{k \in \mathcal{Z}\atop k \neq j} \big( N_{kj}(t {\wedge R}) - N_{jk}(t {\wedge R}) \big){.}
\end{align}
From~\eqref{eq:decomp}, we may conclude that for the kernel estimators and the theoretical quantities, respectively, it holds that
\begin{align} \label{eq:indicator_decomp}
\amsmathbb{I}^{(n)}_j(t | x)
&=
\amsmathbb{I}^{(n)}_j(0 | x) - \amsmathbb{C}_j^{(n)}(t | x) + \sum_{k \in \mathcal{Z}\atop k \neq j} \big( \amsmathbb{N}^{(n)}_{kj}(t | x) - \amsmathbb{N}^{(n)}_{jk}(t | x) \big), \\  \label{eq:pc_decomp}
p_j^{\texttt{c}}(t | x)
&=
p_j^{\texttt{c}}(0 | x) - C_j(t|x) + \sum_{k \in \mathcal{Z}\atop k \neq j} \big(p_{kj}^{\texttt{c}}(t | x) - p_{jk}^{\texttt{c}}(t | x) \big),
\end{align}
where $C_j(t | x) := \amsmathbb{P}(R \leq t, Z_R = j \, | \, X = x)$ with Nadaraya--Watson type kernel estimator
\begin{align*}
\amsmathbb{C}_j^{(n)}(t | x) = \frac{1}{n} \sum_{\ell = 1}^n \frac{\mathds{1}_{\{R^\ell \leq t\}} \mathds{1}_{\{Z^\ell_{R^{\ell}} = j\}} \mathbb{g}^{(n,\ell)}(x)}{\mathbb{g}^{(n)}(x)}.
\end{align*}
Note that according to~\eqref{eq:nada_conv},
\begin{align}\label{eq:censoring_point}
\amsmathbb{C}_j^{(n)}(t | x) \overset{\text{a.s.}}{\to} C_j(t | x) = \amsmathbb{P}(R \leq t, Z_R = j \, | \, X = x), \quad n \to \infty.
\end{align}
{Equation~\eqref{eq:indicator_decomp} is of independent interest.} The fact that we may basically recover $\amsmathbb{I}^{(n)}_j$ from the kernel estimators of the counts is used to great extent in the proofs. It is, however, also directly relevant in connection with the implementation of the estimators, where it can lead to significant reductions in computational load. Indeed, construction of $\amsmathbb{N}^{(n)}_{jk}(\cdot | x)$ is relatively cheap compared to $\amsmathbb{I}^{(n)}_j(\cdot | x)$, and identity~\eqref{eq:indicator_decomp} serves to circumvent the explicit computation of the latter.

Notice that the correction factor $C_j(t | x)$ disappears as censoring becomes lighter, giving a direct interpretation of~\eqref{eq:pc_decomp} as expected inflow and outflow of a state, but adjusted for censoring. The correction factors are such that summation with respect to $j$ simply yields the distribution of the censoring mechanism.

{Consistency of even the Kaplan--Meier estimator on the `maximal interval' relies on either martingale techniques in $t$ (for convergence in probability) or reverse supermartingale techniques in $n$ (for almost sure convergence)~\citep{StuteWang1993,BakryGillMolchanov1994}.} In any case, our kernel disrupts these delicate properties. By conceding convergence only up to compact subsets within the support of the censoring variable $R$, we may concentrate on the extension to a multi-state setting, using empirical process theory.
\begin{proposition}\label{prop:N_I_const}
{Suppose $\theta_x$ satisfies~\eqref{eq:right_endpoint}}. It then holds that
\begin{align} \label{eq:count_uniform}
&\sup_{0 \leq t \leq {\theta_x}} \Big| \amsmathbb{N}^{(n)}_{jk}(t | x) - p_{jk}^{\texttt{c}}(t | x) \Big| \overset{\text{a.s.}}{\to} 0, &&n \to \infty, \\ \label{eq:indicator_uniform}
&\sup_{0 \leq t \leq {\theta_x}} \Big| \amsmathbb{I}^{(n)}_j(t | x) - p_j^{\texttt{c}}(t | x) \Big| \overset{\text{a.s.}}{\to} 0, &&n \to \infty. 
\end{align}
\end{proposition}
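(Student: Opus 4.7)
The plan is to establish \eqref{eq:count_uniform} first, treating $\amsmathbb{N}^{(n)}_{jk}$ and $\amsmathbb{C}_j^{(n)}$ together, and then transfer uniform consistency to the exposure indicator via the linear decomposition in \eqref{eq:indicator_decomp}--\eqref{eq:pc_decomp}.

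\textbf{Pointwise convergence.} Since $N^\ell_{jk}(t \wedge R^\ell) \leq N^\ell_{jk}(\theta)$ for $t \in [0,\theta]$, Assumption~\ref{ass:moment_N} gives $\amsmathbb{E}[|N_{jk}(t \wedge R)|^{1+\delta}] < \infty$. Applying \eqref{eq:nada_conv} to the i.i.d.\ replicates of $Y = N_{jk}(t \wedge R)$, and respectively of $Y = \mathds{1}_{\{R \leq t, Z_R = j\}}$, yields pointwise a.s.\ convergence $\amsmathbb{N}^{(n)}_{jk}(t|x) \to p_{jk}^{\texttt{c}}(t|x)$ and $\amsmathbb{C}_j^{(n)}(t|x) \to C_j(t|x)$ for each fixed $t \in [0,\theta]$.

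\textbf{Upgrade to uniform convergence.} Both $t \mapsto N^\ell_{jk}(t \wedge R^\ell)$ and $t \mapsto \mathds{1}_{\{R^\ell \leq t, Z^\ell_R = j\}}$ are nondecreasing and right-continuous, and the kernel weights $\mathbb{g}^{(n,\ell)}(x)/\mathbb{g}^{(n)}(x)$ are nonnegative; hence $t \mapsto \amsmathbb{N}^{(n)}_{jk}(t|x)$ and $t \mapsto \amsmathbb{C}_j^{(n)}(t|x)$ are càdlàg and monotone, as are their deterministic limits, which are bounded on $[0,\theta]$ by Assumption~\ref{ass:moment_N}. Fix a countable dense set $D \subset [0,\theta]$ containing the endpoints and the (at most countable) discontinuities of the limits; a countable-intersection argument then gives simultaneous a.s.\ convergence on $D$. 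A P\'olya-type sandwich argument over a sufficiently fine partition of $[0,\theta]$ by points of $D$ -- chosen so that the deterministic limit oscillates by less than $\varepsilon$ on each sub-interval -- upgrades pointwise to uniform convergence, yielding \eqref{eq:count_uniform} and the analogous uniform strengthening of \eqref{eq:censoring_point}.

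\textbf{Exposure indicator.} Subtracting \eqref{eq:pc_decomp} from \eqref{eq:indicator_decomp} writes $\amsmathbb{I}_j^{(n)}(t|x) - p_j^{\texttt{c}}(t|x)$ as a finite linear combination of the differences just controlled uniformly, together with the $t = 0$ term $\amsmathbb{I}_j^{(n)}(0|x) - p_j^{\texttt{c}}(0|x)$, which vanishes a.s.\ by one more application of \eqref{eq:nada_conv} with $Y = \mathds{1}_{\{Z_0 = j\}}$. The triangle inequality then delivers \eqref{eq:indicator_uniform}.

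\textbf{Main obstacle.} The crux is the uniform-in-$t$ upgrade while preserving the almost-sure mode of convergence and without postulating a bounded number of transitions. P\'olya-type arguments hinge on some form of equicontinuity or monotonicity, and here it is precisely the monotonicity of the underlying counting and censoring-indicator processes that rescues us, provided the limit satisfies the finiteness $p_{jk}^{\texttt{c}}(\theta|x) < \infty$ coming once more from Assumption~\ref{ass:moment_N}. An alternative empirical-process route based on bracketing or uniform entropy would require the same integrability but additionally demand control of the stochastic equicontinuity of the kernel-smoothed process, which the monotonicity route sidesteps entirely.
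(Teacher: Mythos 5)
Your proposal is correct and follows essentially the same route as the paper: pointwise consistency from~\eqref{eq:nada_conv}, a Glivenko--Cantelli/P\'olya upgrade to uniformity driven by monotonicity of the cumulative counting and censoring processes, and transfer to $\amsmathbb{I}^{(n)}_j$ through the decomposition~\eqref{eq:indicator_decomp}--\eqref{eq:pc_decomp}. The only superficial difference is that the paper makes the eventual a.s.\ boundedness of $\amsmathbb{N}^{(n)}_{jk}(\theta|x)$ explicit by localizing to full-measure events $C_m$, whereas you bypass this by noting directly that the monotone limit $p_{jk}^{\texttt{c}}(\theta|x)$ is finite under Assumption~\ref{ass:moment_N}.
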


\begin{theorem}[Strong uniform consistency]\label{theo:L_P_const}
{Suppose $\theta_x$ satisfies~\eqref{eq:right_endpoint}}. It then holds that
\begin{align*}
&\sup_{0 \leq t \leq {\theta_x}} \big| \mathbb{\Lambda}_{jk}^{(n,\varepsilon)}(t | x) - \Lambda_{jk}^{(\varepsilon)}(t | x) \big| \overset{\text{a.s.}}{\to} 0, \quad n \to \infty. \\
&\sup_{0 \leq t \leq {\theta_x}} \big| \mathbb{p}_j^{(n,\varepsilon)}(t | x) - p_j^{(\varepsilon)}(t | x) \big| \overset{\text{a.s.}}{\to} 0, \quad n \to \infty.
\end{align*}
\end{theorem}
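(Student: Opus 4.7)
The plan is to reduce both claims to the uniform consistency of the components already established in Proposition~\ref{prop:N_I_const}, in two successive steps. First, I would transfer uniform convergence from $(\amsmathbb{N}^{(n)}, \amsmathbb{I}^{(n)})$ to $\mathbb{\Lambda}^{(n,\varepsilon)}$ by a direct Lebesgue--Stieltjes estimate. Second, I would transfer it further from $\mathbb{\Lambda}^{(n,\varepsilon)}$ to $\mathbb{p}^{(n,\varepsilon)}$ via continuity of the matrix product integral in the supremum norm on paths of uniformly bounded variation, in line with the Shorack--Wellner strategy referenced in Section~\ref{sec:consistency}.

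For the first transfer, add and subtract to write
\begin{align*}
\mathbb{\Lambda}_{jk}^{(n,\varepsilon)}(t | x) - \Lambda_{jk}^{(\varepsilon)}(t | x) = T_1^{(n)}(t) + T_2^{(n)}(t)
\end{align*}
with
\begin{align*}
T_1^{(n)}(t) &= \int_0^t \Bigl( \tfrac{1}{\amsmathbb{I}_j^{(n)}(s- | x) \vee \varepsilon} - \tfrac{1}{p_j^{\texttt{c}}(s- | x) \vee \varepsilon} \Bigr) \amsmathbb{N}_{jk}^{(n)}(\mathrm{d}s | x), \\
T_2^{(n)}(t) &= \int_0^t \tfrac{1}{p_j^{\texttt{c}}(s- | x) \vee \varepsilon} \bigl( \amsmathbb{N}_{jk}^{(n)} - p_{jk}^{\texttt{c}} \bigr)(\mathrm{d}s | x).
\end{align*}
Since $y \mapsto 1/(y \vee \varepsilon)$ is Lipschitz with constant $1/\varepsilon^2$, the first piece is bounded uniformly in $t \in [0,\theta]$ by $\varepsilon^{-2} \sup_s |\amsmathbb{I}_j^{(n)}(s|x) - p_j^{\texttt{c}}(s|x)| \cdot \amsmathbb{N}_{jk}^{(n)}(\theta|x)$; the first factor vanishes a.s.\ by~\eqref{eq:indicator_uniform}, while the second is a.s.\ eventually bounded by~\eqref{eq:count_uniform}. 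The second piece I would treat by integration by parts, obtaining a boundary term dominated by $\varepsilon^{-1}\sup_s |\amsmathbb{N}_{jk}^{(n)} - p_{jk}^{\texttt{c}}|$ and a Stieltjes remainder dominated by the same supremum times the total variation of $1/(p_j^{\texttt{c}}(\cdot|x) \vee \varepsilon)$ on $[0,\theta]$. The latter is finite because $p_j^{\texttt{c}}(\cdot|x)$ is of finite variation on $[0,\theta]$ via the decomposition~\eqref{eq:pc_decomp} (all of whose components are monotone and bounded under Assumption~\ref{ass:moment_N}) and the Lipschitz property of $y \mapsto 1/(y \vee \varepsilon)$. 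A final appeal to~\eqref{eq:count_uniform} closes the first claim.

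For the occupation probabilities, I would invoke continuity of the matrix product integral on sets of uniformly bounded total variation (e.g.\ Theorem~7 of \cite{GillJohansen1990}, or the argument of Subsection~7.3 in \cite{ShorackWellner2009}). The limit $\Lambda^{(\varepsilon)}(\cdot|x)$ has entry-wise total variation on $[0,\theta]$ bounded by $p_{jk}^{\texttt{c}}(\theta|x)/\varepsilon$, while $\mathbb{\Lambda}^{(n,\varepsilon)}(\cdot|x)$ has entry-wise total variation bounded by $\amsmathbb{N}_{jk}^{(n)}(\theta|x)/\varepsilon$, which is a.s.\ eventually bounded by Proposition~\ref{prop:N_I_const}. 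Together with the a.s.\ convergence $\mathbb{p}^{(n)}(0|x) = \amsmathbb{I}^{(n)}(0|x) \to p^{\texttt{c}}(0|x) = p(0|x)$ from the same proposition, this yields the claimed uniform convergence on $[0,\theta]$. The main obstacle is the bound on $T_2^{(n)}$: since $p_j^{\texttt{c}}(\cdot|x)$ is generally not monotone once the state space has more than two states, its finite-variation property is not automatic, and the decomposition~\eqref{eq:pc_decomp} into inflow counts, outflow counts, and a censoring correction, specific to the multi-state jump-process setting, is what keeps the argument in reach.
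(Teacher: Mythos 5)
Your proposal is correct and follows essentially the same route as the paper: decompose the Nelson--Aalen difference into the two terms $T_1^{(n)}, T_2^{(n)}$, bound $T_1^{(n)}$ via regularity of $y\mapsto 1/(y\vee\varepsilon)$ together with~\eqref{eq:indicator_uniform} and the eventual boundedness of $\amsmathbb{N}^{(n)}_{jk}(\theta|x)$, bound $T_2^{(n)}$ by integration by parts against $1/(p_j^{\texttt{c}}(\cdot|x)\vee\varepsilon)$ whose finite variation on $[0,\theta]$ follows from~\eqref{eq:pc_decomp}, and then pass to the Aalen--Johansen estimator via Theorem~7 of~\cite{GillJohansen1990} using the a.s.\ eventually bounded total variation of $\mathbb{\Lambda}^{(n,\varepsilon)}(\cdot|x)$ and the convergence of $\amsmathbb{I}^{(n)}(0|x)$. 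Your closing remark correctly isolates the only subtle point --- that the non-monotonicity of $p_j^{\texttt{c}}(\cdot|x)$ in the multi-state setting is exactly why the decomposition~\eqref{eq:pc_decomp} into monotone pieces is needed --- which matches the paper's reliance on that identity.
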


\subsection{Inclusion of atoms}\label{subsec:cosistency_atoms}

Theoretically, kernel functions work not just for absolutely continuous covariates, but in all cases. Sometimes one may, however, take particular interest in cases where some of the covariates have known atoms. Examples include the case when a covariate is the state of the jump process at an earlier point in time (landmarking) or when considering the duration since a sojourn in a particular state as a covariate. For the Markov case this is of course superfluous, but for general jump processes the inclusion of such internal covariates is necessary to get hold of the entire dynamics from and to any given points in time.

In this connection, we now propose a simple kernel which mixes sub-sampling with usual absolutely continuous kernel methods. We aim to embed landmarking methods as a particular case of our specification. The analysis of such an estimator is straightforward using standard techniques, though the practical implications are substantial when analyzing, for instance, semi-Markov models.

Thus, in the following, suppose that the distribution of $X$ is not absolutely continuous with respect to the Lebesgue measure but instead admits a density $g$ with respect to $\nu$ given by
\begin{align*}
\nu = \bigotimes_{i=1}^d \bigg( \lambda + \sum_{m=1}^\infty \delta_{\alpha_m} \bigg),
\end{align*}
where $\delta_y$ denotes the Dirac measure in $y$, and $\alpha = \{\alpha_1, \alpha_2, \ldots\}$ are the known potential atoms for the marginal distributions of $X$. The duration process associated with a semi-Markov model has, for instance, a known atom at the current point in time.

We propose the following kernel estimator for the density $g$:
\begin{align*}
\mathbb{g}^{(n)}(x)
&=
\frac{1}{n}
\sum_{\ell = 1}^n \mathbb{g}^{(n,\ell)}(x) \\
&=
\frac{1}{n}
\sum_{\ell = 1}^n
\prod_{i=1}^d \bigg( \frac{1}{a_n}  K_i\Big(\frac{x_i - X_i^\ell}{a_n}\Big) \mathds{1}_{\{X_i^\ell \notin \alpha \}} \mathds{1}_{\{x_i \notin \alpha\}} + \mathds{1}_{\{X_i^\ell = x_i\}} \mathds{1}_{\{x_i \in \alpha\}}\bigg).
\end{align*}
The consistency of the estimator under Assumption~\ref{ass:band_seq} follows from the usual case, but taking explicitly care of the known atoms.
\begin{proposition}\label{prop:kernel_atoms}
It holds that
\begin{align*}
\mathbb{g}^{(n)}(x)\overset{\text{a.s.}}{\to} g(x), \quad n \to \infty,
\end{align*}
and, for any random variable $Y$ with i.i.d.\ replicates $(Y^\ell)$ satisfying $\amsmathbb{E}[|Y|^{1+\delta}] < \infty$, that
\begin{align*}
\frac{1}{n}\frac{\sum_{\ell=1}^n Y^\ell\: \mathbb{g}^{(n,\ell)}(x)}{\mathbb{g}^{(n)}(x)} \overset{\text{a.s.}}{\to} \amsmathbb{E}[Y \, | \, X=x],\quad n \to \infty.
\end{align*}
\end{proposition}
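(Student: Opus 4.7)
The plan is to decompose the product defining $\mathbb{g}^{(n,\ell)}(x)$ according to which coordinates of $x$ are atoms. Set $A = \{i : x_i \in \alpha\}$ and $B = \{1,\ldots,d\} \setminus A$. Inspection of the product shows that the $\ell$-th summand vanishes unless $X_i^\ell = x_i$ for all $i \in A$ and $X_i^\ell \notin \alpha$ for all $i \in B$; denote this event by $E_\ell$ and the corresponding subset of $\amsmathbb{R}^d$ by $E$. On $E_\ell$, the estimator collapses to an ordinary product kernel in the coordinates $B$:
\begin{align*}
\mathbb{g}^{(n,\ell)}(x) = \mathds{1}_{E_\ell} \prod_{i \in B} \frac{1}{a_n} K_i\Big(\frac{x_i - X_i^\ell}{a_n}\Big).
\end{align*}

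Because $\nu$ restricted to $E$ is the product of point masses at $x_A = (x_i)_{i \in A}$ with Lebesgue measure on $(\amsmathbb{R} \setminus \alpha)^{|B|}$, and $\alpha$ is $\lambda$-null, the conditional distribution of $(X_i)_{i \in B}$ given $X \in E$ has Lebesgue density $y_B \mapsto g(x_A, y_B)/\amsmathbb{P}(X \in E)$. I would then condition on the $n_E$ indices $\ell$ for which $E_\ell$ occurs, observe that $n_E/n \to \amsmathbb{P}(X \in E)$ almost surely by the strong law of large numbers, and apply the classical almost sure convergence theorem for product kernel density estimators from~\cite{Stute1986} in dimension $|B|$ to the sub-sample. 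Multiplying the two limits yields $\mathbb{g}^{(n)}(x) \to g(x_A, x_B) = g(x)$ almost surely. The boundary cases $|B| = 0$ (pure SLLN applied to indicators of atom matches) and $|A| = 0$ (the standard kernel estimator) are recovered as expected.

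The Nadaraya--Watson statement follows by applying the same decomposition to the numerator $n^{-1} \sum_\ell Y^\ell \mathbb{g}^{(n,\ell)}(x)$. Under the $(1+\delta)$-moment condition and Assumption~\ref{ass:band_seq}, the analogous result from~\cite{Stute1986} applied to the sub-sample indexed by $\{\ell : E_\ell \text{ holds}\}$ delivers convergence of the ratio to $\amsmathbb{E}[Y \mid (X_i)_{i \in B} = x_B, X \in E]$; since $X \in E$ pins down $X_i = x_i$ for all $i \in A$, this equals $\amsmathbb{E}[Y \mid X = x]$. The only non-routine step is this last identification of the conditional expectation in the presence of the mixed discrete--continuous reference measure $\nu$, which amounts to a standard disintegration argument but must be written out carefully; all remaining ingredients are direct reductions to the absolutely continuous case whose statements are quoted in the excerpt above.
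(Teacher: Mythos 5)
Your proposal rests on the same core reduction as the paper's proof: isolate the atom coordinates, observe that each summand in $\mathbb{g}^{(n)}(x)$ vanishes unless the sample point matches $x$ exactly on those coordinates and avoids the atoms on the others, and then invoke Stute's almost sure convergence theorem in the remaining continuous coordinates. Where the two arguments diverge is in how that reduction is organized. You condition on the random sub-sample $\{\ell : E_\ell\}$, normalize by its random size $n_E$, apply the kernel density result to the sub-sample alone, and recover the factor $\amsmathbb{P}(X \in E)$ via $n_E/n$. The paper instead multiplies and divides by the full-sample kernel density estimator $\frac{1}{n}\sum_\ell a_n^{-(d-d_1)}\tilde{K}((x^c - X^{\ell,c})/a_n)$, so that $\mathbb{g}^{(n)}(x)$ is recognized as a Nadaraya--Watson estimator of the conditional probability $g^{a|c}(x^a|x^c) = \amsmathbb{P}(X^a = x^a, X^c_i \notin \alpha \,\forall i \mid X^c = x^c)$ multiplied by a kernel density estimator of $g^c(x^c)$, and Theorem~3 of Stute (1986) is applied twice to full-sample averages of i.i.d.\ terms with no conditioning at all.

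The distinction matters slightly. Applying Stute's theorem to the sub-sample means applying it along a random subsequence of sample sizes $n_E$, with a bandwidth $a_n$ indexed by the full sample size $n$ rather than by $n_E$. Since $n_E/n \to \amsmathbb{P}(X\in E) > 0$ almost surely, the three conditions of Assumption~\ref{ass:band_seq} do transfer from $(n,a_n)$ to $(n_E,a_n)$ — in particular $n_E a_n^{|B|} \to \infty$ follows from $n a_n^d \to \infty$ because $|B| \le d$ and $a_n < 1$ eventually, and the summability condition can be bounded term by term using $n_E \geq n\amsmathbb{P}(X\in E)/2$ eventually a.s. — but this compatibility, together with the conditional i.i.d.\ structure of the sub-sample given its index set, needs to be spelled out before the cited theorem can be invoked. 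The paper's Nadaraya--Watson rewriting avoids this entirely by never leaving the full-sample i.i.d.\ setting. Conversely, the step you flag as non-routine — identifying $\amsmathbb{E}[Y \mid X_B = x_B, X \in E]$ with $\amsmathbb{E}[Y \mid X = x]$ — is in fact immediate, since $X \in E$ fixes $X_A = x_A$ and the pair of conditions determines $X$ pointwise; the genuine care is at the random subsequence step above. Both routes are correct; the paper's is the more economical one.
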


\begin{remark}
Referring to the proof, for a fixed $x\in\amsmathbb{R}^d$ on the form $x=(x^a,x^c)$ with $x^c$ of dimension $d_2 = d - d_1$, we may relax b) and c) of Assumption~\ref{ass:band_seq} by replacing $d$ with $d_2$.
\end{remark}
The proof of Proposition~\ref{prop:kernel_atoms} is elementary and hinges on multiple reductions to the absolutely continuous case. In general, this technique can also be applied, together with the other arguments we provide, to prove uniform consistency and asymptotic normality of the estimators of interest. {Note that~\eqref{eq:right_endpoint} must then be adjusted for atomic $x$.} However, to keep the presentation clear and concise, in the remainder of the paper we turn our attention back to the absolutely continuous case. At no point would atom inclusion pose a methodological issue.

\section{Asymptotic normality}\label{sec:normality}

{This section is devoted to establishing asymptotic normality of our estimators. The proof technique is constructive and consists of first using the bracketing central limit theorem to obtain weak convergence of the empirical processes associated with $\amsmathbb{N}^{(n)}$ and $\amsmathbb{I}^{(n)}$, second applying the Skorokhod representation to extend the weak convergence to the empirical process of $\mathbb{\Lambda}^{(n)}$, and finally alluding to the functional $\delta$-method to obtain weak convergence of the empirical process associated with $\mathbb{p}^{(n)}$.

Subsection~\ref{subsec:prelim} deals with a preliminary result on the finite-dimensional distributions and is followed by Subsection~\ref{subsec:weak}, where we establish the main weak convergence results. Finally, Subsection~\ref{subsec:plugin} provides plug-in estimators for the covariances.}

\subsection{Preliminaries}\label{subsec:prelim}

In this subsection, we study in a first step the finite-dimensional asymptotics of the components of the estimators. The following assumptions on the local behavior around $x$ are routine and typically not considered restrictive, confer with~\cite{DoukhanLang2009}.

\begin{assumption}[On the joint distributions]\label{ass:smooth}
{ The mappings}
\begin{align*}
&x \mapsto g(x), \quad
x \mapsto \amsmathbb{P}(Z_0 = j \, | \, X = x) g(x), \quad
x \mapsto \amsmathbb{E}[N_{jk}(t) \, | \, X = x] g(x)
\end{align*}
are continously differentiable, with bounded derivatives, in a neighborhood around $x$.
\end{assumption}

{
\begin{assumption}[On the kernel functions]\label{ass:band_kernel_add}
It holds that
\begin{enumerate}[label = (\alph*)]
\item $na_n^{d+4}\to 0$ as $n\to\infty$,
\item For all $i\in\{1,\dots,d\}$ we have $\int K_i(u) \, \mathrm{d}u = 1$ and $\int u K_i(u) \, \mathrm{d}u = 0.$
\end{enumerate}
\end{assumption}
}

\begin{assumption}[Moment condition on the counting processes]\label{ass:bounded}
For $\delta>2$, the mappings
\begin{align*}
x\mapsto \amsmathbb{E}[N_{jk}(t)^{1+\delta} \, | \, X = x] g(x)
\end{align*}
are bounded in neighborhood of $x$.
\end{assumption}
\begin{remark}
The more detailed Assumption~\ref{ass:bounded} is consistent with Assumption~\ref{ass:moment_N} and $\delta>2$.
\end{remark}
Before we state the main result regarding the finite-dimensional distributions, we introduce some additional notation for terms involved in the limiting covariance processes:
\begin{align*}
&p_{j,l}^{\texttt{c}}(s,t | x)
=
\amsmathbb{E}[\mathds{1}_{\{Z_s = j\}}\mathds{1}_{\{s < R\}} \mathds{1}_{\{Z_t = l\}}\mathds{1}_{\{t < R\}} \, | \, X = x], \quad
p_{j}^{\texttt{c}}(s,t | x)
=
p_{j,j}^{\texttt{c}}(s,t | x), \\
&p_{jk}^{\texttt{c}}(s,t | x)
=
\amsmathbb{E}[N_{jk}(s \wedge R)N_{jk}(t \wedge R) \, | \, X = x], \\
&p_{jk,lm}^{\texttt{c}}(s,t | x)
=
\amsmathbb{E}[N_{jk}(s \wedge R)N_{lm}(t \wedge R) \, | \, X = x], \quad
p_{jk}^{2,\texttt{c}}(t | x)
=
p_{jk,jk}^{\texttt{c}}(t,t | x), \\
 &p_{jk}^{2,\texttt{c}}(s,t | x)
 =
\amsmathbb{E}[\{N_{jk}(t \wedge R)-N_{jk}(s \wedge R)\}^2 \, | \, X = x],\\
&p_{jk,l}^{3,\texttt{c}}(s,t | x)
=
\amsmathbb{E}[N_{jk}(s \wedge R)\mathds{1}_{\{Z_t = l\}}\mathds{1}_{\{t < R\}} \, | \, X = x], \quad
p_{jk}^{3,\texttt{c}}(s,t | x)
=
p_{jk,j}^{3,\texttt{c}}(s,t | x).
\end{align*}
In terms of kernel functions, we also define the following shorthand notation.
\begin{align*}
&\mathbb{k}^{(n)}(u,x)
=
\prod_{i=1}^d  \frac{1}{a_n}  K_i\Big(\frac{x_i - u_i}{a_n}\Big), \\
&\phi(x)
=
\frac{\int a_n^{d}\big(\mathbb{k}^{(n)}(u,x)\big)^2 \, \lambda(\mathrm{d}u)}{g(x)}
=
\frac{\int \prod_{i=1}^d  K_i(u_i)^2 \, \lambda(\mathrm{d}u)}{g(x)}.
\end{align*}
Note that $\phi$ is independent of $n$. Finally, we introduce the empirical processes
\begin{align*}
\xi_{jk}^{(n)}(t|x)&=(na_n^{d})^{1/2}\Big(\amsmathbb{N}^{(n)}_{jk}(t|x)-p_{jk}^{\texttt{c}}(t | x)\Big), \\
\xi_{j}^{(n)}(t|x)&=(na_n^{d})^{1/2}\Big(\amsmathbb{I}^{(n)}_{j}(t|x)-p_{j}^{\texttt{c}}(t | x)\Big).
\end{align*}
It is worth noting that the above processes are not centered, though the bias term is seen to vanish in the asymptotic results below. Elements of the proof of the following proposition relate to~\cite{Dabrowska1987}.
\begin{proposition}[Finite dimensional distributions]\label{prop:fidis1}
{It holds that}
\begin{align*}
\big(\xi_{jk}^{(n)}(t|x),\xi_{j}^{(n)}(t|x)\big)_{t\in[0,{\infty)}}\stackrel{\textrm{fd}}{\to}(\xi_{jk}(t|x),\xi_j(t|x))_{t\in[0,{\infty)}},
\end{align*}
the limit being a centered Gaussian process with
{
\begin{align*}
\text{Cov}\big(\xi_{jk}(s|x),\xi_{j}(t|x)\big)&=\phi(x)\big\{p^{3,\texttt{c}}_{jk}(s,t|x)-p^{\texttt{c}}_{jk}(s|x)p^{\texttt{c}}_{j}(t|x)\big\},\\
\text{Cov}\big(\xi_{jk}(s|x),\xi_{jk}(t|x)\big)&=\phi(x)\big\{p^{\texttt{c}}_{jk}(s,t|x)-p^{\texttt{c}}_{jk}(s|x)p^{\texttt{c}}_{jk}(t|x)\big\},\\
\text{Cov}\big(\xi_{j}(s|x),\xi_{j}(t|x)\big)&=\phi(x)\big\{p^{\texttt{c}}_{j}(s,t|x)-p^{\texttt{c}}_{j}(s|x)p^{\texttt{c}}_{j}(t|x)\big\}.
\end{align*}}
\end{proposition}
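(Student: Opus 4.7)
The natural approach is to reduce joint convergence via the Cramér--Wold device to a one-dimensional CLT for an i.i.d.\ sum. First I would fix finitely many time points $t_1,\ldots,t_m\in[0,\theta]$ and scalar coefficients, then write, for each term appearing in the linear combination,
\begin{align*}
\amsmathbb{N}_{jk}^{(n)}(t | x) - p_{jk}^{\texttt{c}}(t | x)
= \frac{1}{\mathbb{g}^{(n)}(x)} \cdot \frac{1}{n} \sum_{\ell=1}^n \big(N_{jk}^\ell(t \wedge R^\ell) - p_{jk}^{\texttt{c}}(t|x)\big)\, \mathbb{g}^{(n,\ell)}(x),
\end{align*}
and analogously for $\amsmathbb{I}_j^{(n)}(t|x) - p_j^{\texttt{c}}(t|x)$. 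Since $\mathbb{g}^{(n)}(x) \to g(x)$ almost surely, Slutsky's lemma reduces the problem to the weak convergence of the numerators scaled by $(na_n^d)^{1/2}/g(x)$, each of which is an i.i.d.\ sum up to a deterministic bias.

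Next I would split each numerator into its mean plus a centered i.i.d.\ sum. The mean is a convolution-type integral
\begin{align*}
\int \big(p_{jk}^{\texttt{c}}(t | u) - p_{jk}^{\texttt{c}}(t | x)\big) g(u)\, \mathbb{k}^{(n)}(u,x) \, \lambda(\mathrm{d}u),
\end{align*}
which, after the substitution $u = x - a_n v$ and Taylor expansion of $p_{jk}^{\texttt{c}}(t|\cdot)g(\cdot)$ around $x$, vanishes to first order in $a_n$ by the kernel moment condition in Assumption~\ref{ass:band_kernel_add}(b) and is then of order $a_n^\psi$ by Assumption~\ref{ass:smooth}. Multiplication by $(na_n^d)^{1/2}$ therefore yields a bias contribution of order $(na_n^{d+2\psi})^{1/2}$, which is negligible thanks to Assumption~\ref{ass:band_kernel_add}(a). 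For the remaining centered linear combination I would apply the Lindeberg--Feller CLT: the variance computation again uses the substitution $u = x - a_n v$, the outer $a_n^d$ factor cancelling the $a_n^{-d}$ arising from $(\mathbb{k}^{(n)})^2$ and leaving the prescribed $\phi(x)$-weighted joint moments $p_{jk}^{\texttt{c}}(s,t|x)$, $p_j^{\texttt{c}}(s,t|x)$, and $p_{jk}^{3,\texttt{c}}(s,t|x)$; the Lindeberg condition is verified by a $(1+\delta)$-moment truncation yielding a Lindeberg ratio of order $(na_n^d)^{(1-\delta)/2}$, which vanishes because Assumption~\ref{ass:bounded} provides $\delta > 2$.

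The main obstacle is the joint bookkeeping of the covariance structure across the two families $\xi_{jk}^{(n)}$ and $\xi_{j}^{(n)}$. Concretely, one must verify that each cross-covariance arising in the Cramér--Wold linear combination reduces, after the kernel substitution and integration, to the correct $\phi(x)$-weighted conditional joint moment; the identity $\text{Cov}(\xi_{jk}(s|x),\xi_j(t|x)) = \phi(x) p_{jk}^{3,\texttt{c}}(s,t|x)$ is representative, and the analogous identities for the other two covariances follow the same pattern using $p_{jk}^{\texttt{c}}(s,t|x)$ and $p_j^{\texttt{c}}(s,t|x)$. A subsidiary but delicate point is that the subtraction of the cross-products of means contributes only an $O(a_n^d)$ correction to the leading variance and so does not affect the limit; this requires some care because the ratio form introduces additional lower-order noise through $\mathbb{g}^{(n)}(x)$ in the denominator.
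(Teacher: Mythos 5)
Your proposal follows essentially the same route as the paper: a bias--variance decomposition, Taylor expansion plus the kernel moment condition to kill the bias under Assumption~\ref{ass:band_kernel_add}(a), Cram\'er--Wold for the finite-dimensional marginals, a moment-based CLT for the resulting i.i.d.\ sum, and Slutsky to absorb the denominator $\mathbb{g}^{(n)}(x)$. Two technical choices differ without affecting the substance. First, you invoke the Lindeberg--Feller/Lyapunov CLT with a moment truncation, while the paper uses a Berry--Esseen bound in conjunction with Proposition~6 of \cite{DoukhanLang2009}; both routes work, and yours is a bit more self-contained. Second, you rewrite $\amsmathbb{N}^{(n)}_{jk}(t|x)-p^{\texttt{c}}_{jk}(t|x)$ directly as $\mathbb{g}^{(n)}(x)^{-1}\,n^{-1}\sum_\ell\big(N^\ell_{jk}(t\wedge R^\ell)-p^{\texttt{c}}_{jk}(t|x)\big)\mathbb{g}^{(n,\ell)}(x)$ so that Slutsky removes the denominator at the outset, whereas the paper instead establishes joint normality of the ``unconditional'' processes $\tilde\xi^{(n)}_{jk}(\cdot,x)$, $\tilde\xi^{(n)}_j(\cdot,x)$ together with the kernel-density process $\tilde\xi^{(n)}(x)$ and then passes to the conditional processes by Slutsky. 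Your rewriting is a mild simplification of the same linearization. Two small cautions: your bias bound of order $a_n^\psi$ matches the $\kappa=1$ case of Assumption~\ref{ass:smooth}, but then $\psi<2$ and $na_n^{d+2\psi}\to0$ does not follow from $na_n^{d+4}\to 0$; like the paper you are effectively in the $\kappa=2$ regime, where the bias is $O(a_n^2)$ and Assumption~\ref{ass:band_kernel_add}(a) applies directly. Also, the centering you build into the summand, $N_{jk}^\ell(t\wedge R^\ell)-p^{\texttt{c}}_{jk}(t|x)$, naturally produces the conditional \emph{variance} $p^{2,\texttt{c}}_{jk}(t|x)-p^{\texttt{c}}_{jk}(t|x)^2$ rather than the raw second moment; you should reconcile this with the stated covariance formula before asserting that the cross-product of means drops out entirely.
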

In similar fashion, we may establish convergence of the finite dimensional distributions across states. It being completely analogous to the previous result, we omit the proof.
\begin{proposition}[Joint finite dimensional distributions]\label{prop:fidis2}
{It holds that}
\begin{align*}
&\big(\xi_{jk}^{(n)}(t|x), \xi_{j}^{(n)}(t|x)\big)_{t\in[0,{\infty)}, j,k \in \mathcal{Z} : j \neq k}
\stackrel{\textrm{fd}}{\to}
(\xi_{jk}(t|x),\xi_{j}(t|x))_{t\in[0,{\infty)}, j,k \in \mathcal{Z} : j \neq k},
\end{align*}
the limit being a centered Gaussian process with
{
\begin{align*}
\text{Cov}(\xi_{jk}(s|x),\xi_{lm}(t|x))&=\phi(x)\big\{p^{\texttt{c}}_{jk,lm}(s,t|x)-p^{\texttt{c}}_{jk}(s|x)p^{\texttt{c}}_{lm}(t|x)\big\}, \\
\text{Cov}(\xi_{jk}(s|x),\xi_{l}(t|x))&=\phi(x)\big\{p^{3,\texttt{c}}_{jk,l}(s,t|x)-p^{\texttt{c}}_{jk}(s|x)p^{\texttt{c}}_{l}(t|x)\big\}, \\
\text{Cov}(\xi_{j}(s|x),\xi_{l}(t|x))&=\phi(x)\big\{p^{\texttt{c}}_{j,l}(s,t|x)-p^{\texttt{c}}_{j}(s|x)p^{\texttt{c}}_{l}(t|x)\big\}.
\end{align*}}
\end{proposition}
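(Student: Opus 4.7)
The plan is to mirror the proof of Proposition~\ref{prop:fidis1} essentially verbatim, applying the Cramér--Wold device to reduce joint convergence at finitely many time points to a univariate CLT. Fix time points $0 \leq t_1 < \cdots < t_M \leq \theta$, state pairs, and real coefficients $\alpha_{jk}^{(m)},\beta_{j}^{(m)}$, and form
\[
S_n = \sum_{m,\,j\neq k} \alpha_{jk}^{(m)} \xi_{jk}^{(n)}(t_m|x) + \sum_{m,\,j} \beta_j^{(m)} \xi_j^{(n)}(t_m|x).
\]
After replacing $\mathbb{g}^{(n)}(x)$ in the denominators by $g(x)$ -- an asymptotically negligible substitution by Proposition~\ref{prop:N_I_const} -- $S_n$ becomes $(na_n^d)^{-1/2}\sum_{\ell=1}^n(V^{(n,\ell)} - \amsmathbb{E} V^{(n,\ell)}) + b_n$ for an i.i.d.\ array $V^{(n,\ell)}$ and a deterministic bias $b_n$.

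The bias is handled precisely as in Proposition~\ref{prop:fidis1}: a Taylor expansion combined with the vanishing first moments of the kernels (Assumption~\ref{ass:band_kernel_add}(b)) and the local smoothness (Assumption~\ref{ass:smooth}) gives $b_n = O((na_n^d)^{1/2}a_n^{\min(\psi,2)})$, which vanishes by Assumption~\ref{ass:band_kernel_add}(a). To invoke Lyapunov's CLT on the i.i.d.\ sum, one controls the $(1+\delta/2)$-th moment of a single summand via the change of variables $v=(x - X^\ell)/a_n$: the Jacobian produces an $a_n^d$ that partially cancels the $a_n^{-d}$ coming from $\mathbb{k}^{(n)}$, while Assumption~\ref{ass:bounded} with $\delta>2$ -- combined with Cauchy--Schwarz to handle products of counting processes across distinct $(j,k)$ and $t_m$ -- yields a locally bounded integrand near $x$. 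The resulting Lyapunov ratio is of order $(na_n^d)^{-\delta/2}\to 0$.

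The limiting variance of $S_n$ then decomposes into pairwise covariances. The new cross-covariances, absent in the single-pair case, take the form
\[
\text{Cov}\big(\xi_{jk}^{(n)}(s|x),\xi_{lm}^{(n)}(t|x)\big) = \frac{a_n^d}{g(x)^2}\amsmathbb{E}\big[\mathbb{k}^{(n)}(X,x)^2 N_{jk}(s\wedge R)N_{lm}(t\wedge R)\big] + o(1),
\]
and analogously for the mixed $(jk,l)$ and the pure-indicator $(j,l)$ cases. Change of variables $v=(x-u)/a_n$ cancels the $a_n^d$ prefactor, and continuity at $x$ of the relevant conditional moment functions multiplied by $g$ (Assumption~\ref{ass:smooth}), together with dominated convergence, produces the claimed limits $\phi(x)p_{jk,lm}^{\texttt{c}}(s,t|x)$, $\phi(x)p_{jk,l}^{3,\texttt{c}}(s,t|x)$, and $\phi(x)p_{j,l}^{\texttt{c}}(s,t|x)$. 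Contributions from pairs of distinct replicates enter at order $1$ rather than $a_n^{-d}$ and therefore drop out in the limit.

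The main obstacle is simply the bookkeeping: each new pair of observables produces a distinct joint conditional moment that must be identified as the correct entry in the covariance list, and one must check that the Lyapunov condition survives the presence of products of counting processes (leaning on $\delta>2$ via Cauchy--Schwarz). Once this has been accounted for, no new conceptual input is required beyond what drives Proposition~\ref{prop:fidis1}, which is why the authors declare the proof ``completely analogous'' and omit it.
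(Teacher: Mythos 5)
Your outline follows the same broad skeleton as the paper's proof of Proposition~\ref{prop:fidis1} (Cram\'er--Wold, Taylor expansion for the bias, a CLT for triangular i.i.d.\ arrays, change of variables for the covariance), so the intent is right. The gap is in how you handle the random denominator $\mathbb{g}^{(n)}(x)$.

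You write that replacing $\mathbb{g}^{(n)}(x)$ by $g(x)$ is ``an asymptotically negligible substitution.'' It is not. The fluctuation $\mathbb{g}^{(n)}(x)-g(x)$ is of exact order $O_P\big((na_n^d)^{-1/2}\big)$, which is the same scale as $\xi_{jk}^{(n)}(t|x)$ and $\xi_j^{(n)}(t|x)$ themselves; pretending the denominator is deterministic therefore discards a contribution that survives in the limit. This is precisely why the paper's proof of Proposition~\ref{prop:fidis1} does not work with the conditional (Nadaraya--Watson) processes directly: it passes to the unconditional quantities $\tilde\xi_{jk}^{(n)}(t,x)=\tilde\xi_{jk}^{(n)}(t|x)\,\mathbb{g}^{(n)}(x)$, carries the density process $\tilde\xi^{(n)}(x)=(na_n^d)^{1/2}\big(\mathbb{g}^{(n)}(x)-g^{(n)}(x)\big)$ as an additional coordinate, computes its covariance with each $\tilde\xi_{jk}^{(n)}(t_m,x)$ and $\tilde\xi_j^{(n)}(t_m,x)$ explicitly, and only at the very end transfers to the conditional processes by a Slutsky/delta-method step. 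Your argument has no analogue of that bookkeeping.

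This shows up concretely in your displayed covariance formula. Writing $\amsmathbb{N}^{(n)}_{jk}(t|x)-p_{jk}^{\mathtt{c}}(t|x)$ as the ratio $\big(n^{-1}\sum_\ell (N^\ell_{jk}(t\wedge R^\ell)-p_{jk}^{\mathtt{c}}(t|x))\mathbb{g}^{(n,\ell)}(x)\big)/\mathbb{g}^{(n)}(x)$ and then applying Slutsky to the denominator alone is legitimate, but the summands in the numerator are then the \emph{centered} quantities $(N^\ell_{jk}(t\wedge R^\ell)-p_{jk}^{\mathtt{c}}(t|x))\mathbb{k}^{(n)}(X^\ell,x)$, not the raw products $N^\ell_{jk}(t\wedge R^\ell)\mathbb{k}^{(n)}(X^\ell,x)$. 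Your expression
\[
\text{Cov}\big(\xi_{jk}^{(n)}(s|x),\xi_{lm}^{(n)}(t|x)\big) = \frac{a_n^d}{g(x)^2}\amsmathbb{E}\big[\mathbb{k}^{(n)}(X,x)^2\, N_{jk}(s\wedge R)N_{lm}(t\wedge R)\big] + o(1)
\]
is the second moment of the uncentered numerators; centering properly would introduce the product $p_{jk}^{\mathtt{c}}(s|x)p_{lm}^{\mathtt{c}}(t|x)$ on the same $O(1)$ scale. Equivalently, in the paper's route this term arises from the cross-covariance with $\tilde\xi^{(n)}(x)$ and from $\text{Var}(\tilde\xi^{(n)}(x))$ in the delta-method expansion. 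Either way, the step that disposes of $\mathbb{g}^{(n)}(x)$ cannot be waved away by appealing to consistency alone (and, as a smaller point, Proposition~\ref{prop:N_I_const} concerns $\amsmathbb{N}^{(n)}_{jk}$ and $\amsmathbb{I}^{(n)}_j$, not $\mathbb{g}^{(n)}$). You need to either carry the density-estimate fluctuation as a joint coordinate, as the paper does, or replace the denominator only in the ratio and keep the centering inside the summands; as written the argument does neither and the covariance derivation is not sound.
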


\subsection{Weak convergence}\label{subsec:weak}

Having established convergence of the finite-dimensional distributions, we now proceed to use this convergence to prove path-wise weak convergence on a compact interval. 

{Since we are dealing with stochastic process convergence, we refer to~\cite{VaartWellner1996} for a comprehensive introduction. Moreover, since the processes involved are all càdlàg and hence separable, we provide in Appendix~\ref{ap:technical} a modified version of the bracketing central limit theorem appropriate to our situation. Concretely, we establish convergence in
$\amsmathbb{D}[a,b]$, the set of all càdlàg functions $z : [a,b]\to \amsmathbb{R}$, $a,b\in \bar{\amsmathbb{R}}$, endowed with the $J_1$-topology, denoted by $\overset{\mathcal{D}}{\to}$.
} 

\begin{proposition}[Joint asymptotic normality of components]\label{prop:asnorm_comt_joint}
{Suppose $\theta_x$ satisfies~\eqref{eq:right_endpoint}}. Then
\begin{align*}
\big(\xi_{jk}^{(n)}(t|x),\xi_{j}^{(n)}(t|x)\big)_{t\in[0,{\theta_x}], j,k \in \mathcal{Z}: j \neq k}
\stackrel{\mathcal{D}}{\to}
(\xi_{jk}(t|x),\xi_j(t|x))_{t\in[0,{\theta_x}], j,k \in \mathcal{Z}: j \neq k}.
\end{align*}
\end{proposition}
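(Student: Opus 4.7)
The plan is to reduce this joint statement to the ingredients already in place: joint finite-dimensional convergence supplied by Proposition~\ref{prop:fidis2}, and marginal tightness, which is exactly what the proof of Proposition~\ref{prop:asnorm_comp} provides. Since $\mathcal{Z}$ is finite, the index set $\{(j,k)\in\mathcal{Z}^2:j\neq k\}\cup\mathcal{Z}$ is finite, so we are dealing with weak convergence in the finite product Skorokhod space $D([0,\theta])^m$ for some fixed $m$. I would therefore structure the proof as a two-line argument: first, the finite-dimensional distributions of the joint vector converge to those of the claimed Gaussian limit by Proposition~\ref{prop:fidis2}; second, tightness holds jointly because tightness in a finite product of Skorokhod spaces is equivalent to tightness of each coordinate projection.

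For the tightness step, I would invoke the standard fact (Billingsley, Prokhorov) that a family of laws on a finite product of metric spaces is relatively compact if and only if each marginal family is relatively compact, applied in the Skorokhod $J_1$-topology. The marginal tightness of $\xi_{jk}^{(n)}(\cdot|x)$ and $\xi_j^{(n)}(\cdot|x)$ for each fixed pair $(j,k)$ with $j \neq k$ and each $j$ is precisely what is verified in the proof of Proposition~\ref{prop:asnorm_comp} via Assumption~\ref{ass:local} together with the moment bound coming from Assumption~\ref{ass:bounded}. Since Assumption~\ref{ass:local} is posed coordinate-wise and already implies the requisite second-moment modulus bound, nothing new is needed for the $\xi_j^{(n)}$ coordinates, and the $\xi_{jk}^{(n)}$ coordinates are handled identically with $j,k$ varying.

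Combining fidi convergence on the product space with joint tightness yields weak convergence of the full vector $\big(\xi_{jk}^{(n)}(\cdot|x),\xi_j^{(n)}(\cdot|x)\big)_{j,k\in\mathcal{Z}:j\neq k}$ in $D([0,\theta])^m$ by the standard Prokhorov-type characterization used in the proof of Proposition~\ref{prop:asnorm_comp}, with limit identified through Proposition~\ref{prop:fidis2} as the centered Gaussian process with covariance structure given there.

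I do not anticipate a substantive obstacle: the only potentially delicate point is that weak convergence in the product Skorokhod space is in general strictly stronger than weak convergence of each coordinate, but this subtlety concerns the identification of the joint limit (handled here by joint fidi convergence), not tightness, which passes from coordinates to finite products without modification. This is also why the authors remark that the argument is completely analogous to Proposition~\ref{prop:asnorm_comp} and omit it.
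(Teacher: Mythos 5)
Your proposal is correct and matches the approach the paper intends: joint finite-dimensional convergence from Proposition~\ref{prop:fidis2} combined with coordinate-wise tightness established as in the proof of Proposition~\ref{prop:asnorm_comp}, which passes automatically to the finite product since $\mathcal{Z}$ is finite. The paper omits this proof precisely because it is this direct extension, so there is nothing to add.
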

We now turn our attention to the actual empirical processes of interest given by
\begin{align*}
\zeta_{jk}^{(n,\varepsilon)}(t|x) 
=
(na_n^{d})^{1/2}\big(\mathbb{\Lambda}^{(n,\varepsilon)}_{jk}(t|x)-\Lambda^{(\varepsilon)}_{jk}(t | x)\big)\\
\gamma_j^{(n,\varepsilon)}(t|x) 
=
(na_n^{d})^{1/2}\big(\mathbb{p}^{(n,\varepsilon)}_{j}(t|x)-p^{(\varepsilon)}_{j}(t | x)\big).
\end{align*}
The latter relates to the occupation probabilities, while the former concerns the cumulative transition rates. {Below, 
weak limits are obtained for these processes, which in particular shows that the non-parametric rate of $(na_n^{d})^{1/2}$ from simple kernel density estimation is preserved in our setting.}

In the following, let $B^{(\varepsilon)}_j(t | x) = \mathds{1}_{\{p_j(t | x) > \varepsilon\}}$. 
\begin{theorem}[Asymptotic normality of the conditional Nelson--Aalen estimator]\label{thm:asnorm}
{Suppose $\theta_x$ satisfies~\eqref{eq:right_endpoint}}. It holds that
\begin{align*}
\big(\zeta_{jk}^{(n,\varepsilon)}(t|x)\big)_{t\in[0,{\theta_x}]}\stackrel{\mathcal{D}}{\to}\big(\zeta_{jk}^{(\varepsilon)}(t|x)\big)_{t\in[0,{\theta_x}]},
\end{align*}
where
\begin{align}\label{eq:zeta_rep}
\zeta_{jk}^{(\varepsilon)}(t|x)=\int_0^t \frac{1}{p_j^{\texttt{c}}(s-|x) \vee \varepsilon} \,\xi_{jk}(\mathrm{d}s | x)-\int_0^t \frac{B^{(\varepsilon)}_j(s- | x)\xi_j(s-|x)}{p_j^{\texttt{c}}(s-|x)} \, \Lambda^{(\varepsilon)}_{jk}(\mathrm{d}s | x).
\end{align}
\end{theorem}
The formula for the limit covariance from the above theorem is presented at the end of its proof, see Appendix~\ref{appC}.

In a similar fashion, we may establish joint asymptotic normality across states. We omit the proof.
\begin{theorem}[Joint asymptotic normality of estimators]\label{thm:asnorm_joint}
{Suppose $\theta_x$ satisfies~\eqref{eq:right_endpoint}}. It holds that
\begin{align*}
\big(\zeta_{jk}^{(n,\varepsilon)}(t|x)\big)_{t\in[0,{\theta_x}], j,k \in \mathcal{Z}: j \neq k}\stackrel{\mathcal{D}}{\to}\big(\zeta_{jk}^{(\varepsilon)}(t|x)\big)_{t\in[0,{\theta_x}], j,k \in \mathcal{Z}: j \neq k},.
\end{align*}
\end{theorem}
The formula for the limit covariance between states from the above theorem can be calculated as in the proof of Theorem~\ref{thm:asnorm} using the expressions of Theorem~\ref{prop:fidis2}.

{To transfer the asymptotic properties to the product integral, we use the functional delta method. This is possible because the product integral mapping is not only continuous but even Hadamard differentiable, confer with~\cite{GillJohansen1990}.}
\begin{theorem}[Asymptotic normality of the conditional Aalen--Johansen estimator]\label{thm:asnorm2}
{Suppose $\theta_x$ satisfies~\eqref{eq:right_endpoint}}. It holds that
\begin{align*}
\big(\gamma^{(n,\varepsilon)}(t|x)\big)_{t\in[0,{\theta_x}]}\stackrel{\mathcal{D}}{\to}\big(\gamma^{(\varepsilon)}(t|x)\big)_{t\in[0,{\theta_x}]},
\end{align*}
where
\begin{align*}
\gamma^{(\varepsilon)}(t|x)
=
p^{(\varepsilon)}(0 | x)\int_0^t \Prodi_0^s \big(\text{Id} +  \Lambda^{(\varepsilon)}(\mathrm{d}u | x) \big) 
\zeta^{(\varepsilon)}(\mathrm{d}s | x)
\Prodi_s^t \big(\text{Id} +  \Lambda^{(\varepsilon)}(\mathrm{d}u | x) \big).
\end{align*}
\end{theorem}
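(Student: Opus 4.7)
The natural route is the functional delta method, applied to the product integral map whose Hadamard differentiability is already cited to \cite{GillJohansen1990} immediately before the statement. In broad strokes, I would treat the conditional Aalen--Johansen estimator as $\Psi(\mathbb{p}^{(n)}(0\,|\,x), \mathbb{\Lambda}^{(n,\varepsilon)}(\cdot\,|\,x))$, where
\begin{align*}
\Psi(\pi_0, \Lambda)(t) = \pi_0 \, \Prodi_0^t \big(\text{Id} + \Lambda(\mathrm{d}s)\big),
\end{align*}
and then differentiate $\Psi$ tangentially along the limit direction coming from Theorem~\ref{thm:asnorm_joint}.

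Concretely, the first step is to assemble the joint input convergence. Theorem~\ref{thm:asnorm_joint} supplies
\begin{align*}
(na_n^d)^{1/2}\big(\mathbb{\Lambda}^{(n,\varepsilon)}(\cdot\,|\,x) - \Lambda^{(\varepsilon)}(\cdot\,|\,x)\big) \stackrel{\mathcal{D}}{\to} \zeta^{(\varepsilon)}(\cdot\,|\,x)
\end{align*}
in the Skorokhod space on $[0,\theta]$, jointly across state pairs. Because $\mathbb{p}^{(n)}(0\,|\,x) = \amsmathbb{I}^{(n)}(0\,|\,x)$ and $p^{\texttt{c}}(0\,|\,x) = p(0\,|\,x)$ (since $R>0$ almost surely), the corresponding rescaled initial-value deviation is a marginal of the process $\xi_\cdot^{(n)}$ studied in Proposition~\ref{prop:asnorm_comt_joint} and thus converges jointly with $\zeta^{(\varepsilon)}$ to a centered Gaussian limit, by the same arguments that produced Theorem~\ref{thm:asnorm_joint}.

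The second step is to invoke the Gill--Johansen Hadamard differentiability of the map $\Lambda \mapsto \Prodi_0^\cdot \big(\text{Id} + \Lambda(\mathrm{d}s)\big)$ at $\Lambda^{(\varepsilon)}$, whose derivative in direction $H$ is precisely
\begin{align*}
t \mapsto \int_0^t \Prodi_0^s \big(\text{Id} + \Lambda^{(\varepsilon)}(\mathrm{d}u\,|\,x)\big) \, H(\mathrm{d}s) \, \Prodi_s^t \big(\text{Id} + \Lambda^{(\varepsilon)}(\mathrm{d}u\,|\,x)\big).
\end{align*}
Combined with left-multiplication by the (deterministic) initial probability vector $p^{(\varepsilon)}(0\,|\,x)$, which is trivially Hadamard differentiable (bilinear in $(\pi_0,\Lambda)$), the chain rule gives the derivative of $\Psi$ at $(p(0\,|\,x), \Lambda^{(\varepsilon)})$. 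Applying the functional delta method then transfers the weak convergence of the joint input to weak convergence of $\gamma^{(n,\varepsilon)}(\cdot\,|\,x)$ towards the process $\gamma^{(\varepsilon)}(\cdot\,|\,x)$ displayed in the theorem.

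The main obstacle is verifying that we are actually in the regime in which the Gill--Johansen result applies: the derivative formula requires $\Lambda^{(\varepsilon)}(\cdot\,|\,x)$ to be of bounded variation on $[0,\theta]$ and to satisfy the technical hypothesis that $\text{Id} + \Delta \Lambda^{(\varepsilon)}(t\,|\,x)$ remain invertible. Bounded variation is immediate from the $\varepsilon$-perturbation, since the integrand $1/(p_j^{\texttt{c}}(s-\,|\,x)\vee \varepsilon)$ is uniformly bounded by $\varepsilon^{-1}$ and $p_{jk}^{\texttt{c}}(\cdot\,|\,x)$ has finite variation on compact subintervals of the support of $R$ by Assumption~\ref{ass:technical}. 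The invertibility/admissibility conditions were also used to guarantee well-definedness of the product integral in~\eqref{eq:prod_int_rep}, so the perturbed cumulative transition rate inherits them. Once these are in place, the delta method argument is essentially mechanical.
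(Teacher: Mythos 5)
Your proposal is correct and follows essentially the same route as the paper: both proofs combine the weak convergence of the conditional Nelson--Aalen estimator with the Hadamard differentiability of the product-integral functional from Gill--Johansen and then apply the functional delta method (the paper cites Theorem~20.8 of~\cite{Vaart1998} for exactly this step). Your additional paragraph verifying bounded variation and the admissibility hypotheses is reasonable due diligence, but it does not change the argument; note also that, like the paper, you display the limit $\gamma^{(\varepsilon)}$ corresponding purely to the $\Lambda$-direction of the derivative, treating the initial vector as fixed when passing to the limit.
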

The covariance processes associated to the Gaussian processes in Theorems~\ref{thm:asnorm} and~\ref{thm:asnorm2} are computationally {challenging}. In principle, one may construct a Monte-Carlo covariance estimator based on simulations of $\xi_j$ and $\xi_{jk}$, using kernel-based plug-in estimators for the unknown quantities. {However, this is computationally expensive, so in Subsection~\ref{subsec:plugin} we present an alternative solution.}

\subsection{Plug-in estimators for the covariances}\label{subsec:plugin}
Introduce $\tilde{\zeta}_{jk}^{(\varepsilon)}(\cdot|x)$ according to
\begin{align*}
\tilde{\zeta}_{jk}^{(\varepsilon)}(t|x)
&=
\sqrt{\phi(x)} \bigg(
\int_0^t \frac{1}{p_j^{\texttt{c}}(s-|x) \vee \varepsilon} \big(N_{jk}(\mathrm{d}s \wedge R)  - p_{jk}^{\texttt{c}}(\mathrm{d}s | x)\big) \\ 
&\quad - \int_0^t \frac{B^{(\varepsilon)}_j(s- | x)\big(\mathds{1}_{\{s \leq R\}}\mathds{1}_{\{Z_{s-}=j\}}-p_j^\texttt{c}(s- | x)\big)}{p_j^{\texttt{c}}(s-|x)} \, \Lambda^{(\varepsilon)}_{jk}(\mathrm{d}s | x) \bigg),
\end{align*}
which is similar to~\eqref{eq:zeta_rep}, but with the empirical processes replaced by single random variables. It is possible to show that
\begin{align*}
\Sigma_{\Lambda_{jk}^{(\varepsilon)}}(s,t | x)
:=
\text{Cov}\big(\zeta_{jk}^{(\varepsilon)}(s|x),\zeta_{jk}^{(\varepsilon)}(t|x)\big)
=
\amsmathbb{E}\big[\tilde{\zeta}_{jk}^{(\varepsilon)}(s|x)\tilde{\zeta}_{jk}^{(\varepsilon)}(t|x) \, \big| \, X = x\big],
\end{align*}
for instance by calculating the right-hand side explicitly and referring to the covariances in the proof of Theorem~\ref{thm:asnorm}. This immediately gives rise to a natural and strongly consistent kernel-based plug-in estimator $\mathbb{\Sigma}_{\Lambda_{jk}^{(\varepsilon)}}^{(n)}(\cdot | x)$ of $\Sigma_{\Lambda_{jk}^{(\varepsilon)}}(\cdot | x)$ given by
\begin{align*}
\mathbb{\Sigma}_{\Lambda_{jk}^{(\varepsilon)}}^{(n)}(s, t | x)
=
\frac{1}{n} \sum_{\ell = 1}^n \frac{\tilde{\zeta}_{jk}^{(n,\ell,\varepsilon)}(s|x)\tilde{\zeta}_{jk}^{(n,\ell,\varepsilon)}(t|x)  \mathbb{g}^{(n,\ell)}(x)}{\mathbb{g}^{(n)}(x)},
\end{align*}
where $\tilde{\zeta}_{jk}^{(n,\ell,\varepsilon)}(\cdot|x)$ is equal to $\tilde{\zeta}_{jk}^{(\varepsilon)}(\cdot|x)$, but with $N_{jk}$, $Z$, and $R$ replaced by $N_{jk}^\ell$, $Z^\ell$, and $R^\ell$ as well as all theoretical quantities replaced by their kernel estimators:
\begin{align*}
\tilde{\zeta}_{jk}^{(n,\ell,\varepsilon)}(t|x)
&=
\sqrt{\tilde{\phi}^{(n)}(x)} \bigg(
\int_0^t \frac{1}{\amsmathbb{I}_j^{(n)}(s- | x) \vee \varepsilon} \big(N^\ell_{jk}(\mathrm{d}s \wedge R^\ell)  - \amsmathbb{N}_{jk}^{(n)}(\mathrm{d}s | x)\big) \\ 
&\quad - \int_0^t \frac{\mathds{1}_{\{\amsmathbb{I}_j^{(n)}(s- | x)>\varepsilon\}}\big(\mathds{1}_{\{s \leq R^\ell\}}\mathds{1}_{\{Z^\ell_{s-}=j\}}-\amsmathbb{I}_j^{(n)}(s- | x)\big)}{\amsmathbb{I}_j^{(n)}(s- | x)} \, \mathbb{\Lambda}_{jk}^{(n,\varepsilon)}(\mathrm{d}s | x) \bigg),
\end{align*}
where $\tilde{\phi}^{(n)}$ is the following kernel-based plug-in estimator of $\phi$:
\begin{align*}
\tilde{\phi}^{(n)}(x)
=
\frac{\int \prod_{i=1}^d  K_i(u_i)^2 \, \lambda(\mathrm{d}u)}{\mathbb{g}^{(n)}(x)}.
\end{align*}
Similarly, 
\begin{align*}
\Sigma_{p_j^{(\varepsilon)}}(s,t | x)
:=
\text{Cov}\big(\gamma_j^{(\varepsilon)}(s|x),\gamma_j^{(\varepsilon)}(t|x)\big)
=
\amsmathbb{E}\big[\tilde{\gamma}_j^{(\varepsilon)}(s|x)\tilde{\gamma}_j^{(\varepsilon)}(t|x) \, \big| \, X = x\big],
\end{align*}
where $\tilde{\gamma}_j^{(\varepsilon)}(\cdot|x)$ is equal to $\gamma_j^{(\varepsilon)}(\cdot|x)$, but with $\zeta^{(\varepsilon)}(\cdot | x)$ replaced by $\tilde{\zeta}^{(\varepsilon)}(\cdot | x)$. Again, this immediately gives rise to a natural and strongly consistent kernel-based plug-in estimator $\mathbb{\Sigma}_{p_j^{(\varepsilon)}}^{(n)}(\cdot | x)$ of $\Sigma_{p_j^{(\varepsilon)}}(\cdot | x)$ given by
\begin{align*}
\mathbb{\Sigma}_{p_j^{(\varepsilon)}}^{(n)}(s, t | x)
=
\frac{1}{n} \sum_{\ell = 1}^n \frac{\tilde{\gamma}_j^{(n,\ell,\varepsilon)}(s|x)\tilde{\gamma}_j^{(n,\ell,\varepsilon)}(t|x)  \mathbb{g}^{(n,\ell)}(x)}{\mathbb{g}^{(n)}(x)},
\end{align*}
where $\tilde{\gamma}^{(n,\ell,\varepsilon)}(\cdot|x)$ is equal to $\tilde{\gamma}^{(\varepsilon)}(\cdot|x)$, but with all theoretical quantities replaced by their estimators:
\begin{align*}
\tilde{\gamma}^{(n,\ell,\varepsilon)}(t|x)
=
\amsmathbb{I}^{(n)}(0 | x)\int_0^t \Prodi_0^s \big(\text{Id} +  \mathbb{\Lambda}^{(n,\varepsilon)}(\mathrm{d}u | x) \big) 
\tilde{\zeta}_{jk}^{(n,\ell,\varepsilon)}(\mathrm{d}s|x)
\Prodi_s^t \big(\text{Id} +  \mathbb{\Lambda}^{(n,\varepsilon)}(\mathrm{d}u | x) \big).
\end{align*}
In case of no covariates, this estimator closely resembles one of the estimators proposed in~\cite{Glidden2002}, {where consistency is established. An extension to our setting is a straightforward adaptation of the unconditional to the conditional case following our proofs techniques, and thus we omit the details.}

%

\begin{appendix}
\section{Proofs of consistency}\label{appA}

\begin{proof}[Proof of Proposition~\ref{prop:N_I_const}]
We first note that arguments analogous to the classic proof of the Glivenko--Cantelli theorem extend~\eqref{eq:censoring_point} to uniform convergence. Referring to the decomposition~\eqref{eq:indicator_decomp}, and invoking also~\eqref{eq:indicator_point}, it thus suffices to establish~\eqref{eq:count_uniform}. To this end, let $C_{m,n}$ and $C_m$ be given by
\begin{align*}
C_{m,n} = \bigg\{ \sum_{j'{,}k'} \amsmathbb{N}^{(n)}_{j'k'}({\theta_x} | x) \leq m\bigg\}, \quad C_m = \Big\{ C_{m,n} \text{ evt.} \Big\}.
\end{align*}
Recall that $\amsmathbb{N}^{(n)}_{jk}(\cdot | x)$ is increasing. Since $\amsmathbb{N}^{(n)}_{jk}(\cdot | x)$ is eventually bounded on $C_m$, arguments analogous to the classic proof of the Glivenko--Cantelli theorem again extend~\eqref{eq:count_point} to~\eqref{eq:count_uniform} on $C_m$. Referring to~\eqref{eq:count_point}, we have for $m$ sufficiently large that $\amsmathbb{P}(C_m) = 1$. This completes the proof.
\end{proof}

\begin{proof}[Proof of Theorem \ref{theo:L_P_const}]
Note that
\begin{align*}
&\sup_{0 \leq t \leq {\theta_x}} \big| \mathbb{\Lambda}_{jk}^{(n,\varepsilon)}(t | x) - \Lambda_{jk}^{(\varepsilon)}(t | x) \big| \\
&\leq
\sup_{0 < t \leq {\theta_x}}
\bigg| \int_0^t \bigg( \frac{1}{\amsmathbb{I}^{(n)}_j(s-|x) \vee \varepsilon} - \frac{1}{p_j^{\texttt{c}}(s-|x) \vee \varepsilon}\bigg) \, \amsmathbb{N}_{jk}^{(n)}(\mathrm{d}s | x) \bigg| \\
&\quad +
\sup_{0 \leq t \leq {\theta_x}}
\bigg| \int_0^t \frac{1}{p_j^{\texttt{c}}(s-|x) \vee \varepsilon} \Big( \amsmathbb{N}_{jk}^{(n)}(\mathrm{d}s | x) - p_{jk}^{\texttt{c}}(\mathrm{d}s|x)\Big) \bigg|.
\end{align*}
Regarding the first term, it holds that
\begin{align*}
&\sup_{0 \leq t \leq {\theta_x}}
\bigg| \int_0^t \bigg( \frac{1}{\amsmathbb{I}^{(n)}_j(s-|x) \vee \varepsilon} - \frac{1}{p_j^{\texttt{c}}(s-|x) \vee \varepsilon}\bigg) \, \amsmathbb{N}_{jk}^{(n)}(\mathrm{d}s | x) \bigg| \\
&\leq
\amsmathbb{N}_{jk}^{(n)}({\theta_x} | x) \sup_{0  t \leq {\theta_x}} \bigg| \frac{1}{\amsmathbb{I}^{(n)}_j({t}|x) \vee \varepsilon} - \frac{1}{p_j^{\texttt{c}}({t}|x) \vee \varepsilon} \bigg|.
\end{align*}
It follows from~\eqref{eq:count_point}, in conjunction with~\eqref{eq:indicator_uniform} and the fact that $[0,1] \ni x \mapsto \frac{1}{x\vee\varepsilon}$ is uniformly continuous, that
\begin{align*}
\amsmathbb{N}_{jk}^{(n)}({\theta_x} | x) \sup_{0 \leq t \leq {\theta_x}} \bigg| \frac{1}{\amsmathbb{I}^{(n)}_j({t}|x) \vee \varepsilon} - \frac{1}{p_j^{\texttt{c}}({t}|x) \vee \varepsilon} \bigg| \overset{\text{a.s.}}{\to} 0, \quad n \to \infty.
\end{align*}
Turning our attention to the second term, note that $\frac{1}{p_j^{\texttt{c}}(\cdot | x) \vee \varepsilon}$ is right-continuous and of finite variation, since this is the case for $p_j^{\texttt{c}}( \cdot | x)$, confer with~\eqref{eq:pc_decomp}. We may therefore apply integration by parts to show that
\begin{align*}
&{\bigg|}\int_0^t \frac{1}{p_j^{\texttt{c}}(s-|x) \vee \varepsilon} \Big( \amsmathbb{N}_{jk}^{(n)}(\mathrm{d}s | x) - p_{jk}^{\texttt{c}}(\mathrm{d}s|x)\Big){\bigg|} \\
&=
{\bigg|}\frac{1}{p_j^{\texttt{c}}(t|x) \vee \varepsilon} \Big( \amsmathbb{N}_{jk}^{(n)}(t | x) - p_{jk}^{\texttt{c}}(t | x) \Big) \\
&\quad- \int_0^t \Big( \amsmathbb{N}_{jk}^{(n)}(s | x) - p_{jk}^{\texttt{c}}(s | x) \Big)  \, \mathrm{d}\bigg(\frac{1}{p_j^{\texttt{c}}(s | x) \vee \varepsilon}\bigg) {\bigg|} \\
&\leq \frac{1}{\varepsilon} \Big| \amsmathbb{N}_{jk}^{(n)}(t | x) - p_{jk}^{\texttt{c}}(t | x) \Big| + \bigg\| \frac{1}{p_j^{\texttt{c}}(\cdot | x) \vee \varepsilon} \bigg\|_{v_1} \sup_{0 \leq s \leq t} \Big| \amsmathbb{N}_{jk}^{(n)}(s | x) - p_{jk}^{\texttt{c}}(s | x) \Big|
\end{align*}
with $\| f \|_{v_1}$ denoting the variation of $f$ on $[0,{\theta_x}]$. In conclusion,
\begin{align*}
&\sup_{0 < t \leq {\theta_x}}
\bigg| \int_0^t \frac{1}{p_j^{\texttt{c}}(s-|x) \vee \varepsilon} \Big( \amsmathbb{N}_{jk}^{(n)}(\mathrm{d}s | x) - p_{jk}^{\texttt{c}}(\mathrm{d}s|x)\Big) \bigg| \\
&\leq
\bigg(\frac{1}{\varepsilon} + \bigg\| \frac{1}{p_j^{\texttt{c}}(\cdot | x) \vee \varepsilon} \bigg\|_{v_1} \bigg) \sup_{0 \leq t \leq {\theta_x}} \Big| \amsmathbb{N}_{jk}^{(n)}(t | x) - p_{jk}^{\texttt{c}}(t | x) \Big| \to 0, \quad n \to \infty,
\end{align*}
confer with~\eqref{eq:count_uniform}. This proves the first part of the theorem. 

Next, note that by~\eqref{eq:count_point},
\begin{align*}
\limsup\limits_{n\to{\infty}} \big\| \mathbb{\Lambda}^{(n,\varepsilon)}(\cdot | x) \big\|_{v_1}
=
\limsup\limits_{n\to{\infty}} \max_{j\in\mathcal{Z}} 2 \sum_{k \in \mathcal{Z}\atop k \neq j} \mathbb{\Lambda}_{jk}^{(n,\varepsilon)}({\theta_x} | x)
=
2 \max_{j\in\mathcal{Z}} \sum_{k \in \mathcal{Z}\atop k \neq j} \Lambda_{jk}^{(\varepsilon)}({\theta_x} \, | \, x) < \infty.
\end{align*}
From Theorem~7 in~\cite{GillJohansen1990}, we may then extend the uniform convergence of $\mathbb{\Lambda}^{(n,\varepsilon)}(\cdot | x)$ to its product integral, that is, in particular,
\begin{align*}
\sup_{0 < t \leq {\theta_x}} \bigg\Vert \Prodi_0^t \big(\text{Id} + \mathbb{\Lambda}^{(n,\varepsilon)}(\mathrm{d}s | x)\big) -  \Prodi_0^t \big(\text{Id} + \Lambda^{(\varepsilon)}(\mathrm{d}s | x)\big) \bigg\Vert  \overset{\text{a.s.}}{\to} 0, \quad n \to \infty.
\end{align*}
Together with the convergence of $\amsmathbb{I}_j^{(n)}(0 | x)$ to $p_j(0 | x)$ according to~\eqref{eq:indicator_point}, this establishes the second part of the theorem and thus concludes the proof.
\end{proof}

\section{Consistency of density estimator}\label{appB}
\begin{proof}[Proof of Proposition \ref{prop:kernel_atoms}]
We proceed by reduction to the case covered in~\cite{Stute1986}. To this end, assume without loss of generality that we may decompose
\begin{align*}
x=(x_1,\dots,x_{d_1},x_{d_1+1},\dots,x_d)=(x^a,x^c),
\end{align*}
where the first $d_1$ coordinates are potential known atoms, say $\alpha_1,\ldots,\alpha_{d_1}$, while the remaining coordinates are not. Introduce the conditional notation $g(x)=g^{a|c}(x^a|x^c) g^c(x^c)$. Straightforward calculations yield
\begin{align*}
\mathbb{g}^{(n)}(x)&=
\frac{1}{n}
\sum_{\ell = 1}^n
\prod_{i=1}^{d_1} \mathds{1}_{\{X_i^\ell = \alpha_i\}} \prod_{i=d_1+1}^d \frac{1}{a_n}  K_i\Big(\frac{x_i - X_i^\ell}{a_n}\Big) \mathds{1}_{\{X_i^\ell \notin \alpha \}}\\
&=
\frac{1}{n}
\sum_{\ell = 1}^n
 \mathds{1}_{\{X_i^\ell = \alpha_i,\,i=1,\dots,d_1\}}  \mathds{1}_{\{X_i^\ell \notin \alpha,\, i=d_1+1,\dots, d \}}\frac{1}{a_n^{d-d_1}} \tilde{K}\Big(\frac{x^c - X^{\ell,c}}{a_n}\Big),
\end{align*}
where
\begin{align*}
\frac{1}{a_n^{d-d_1}}\tilde{K}\Big( \frac{x^c-X^{\ell,c}}{a_n}\Big):= \prod_{i=d_1+1}^d\frac{1}{a_n}  K_i\Big(\frac{x_i - X_i^\ell}{a_n}\Big)
\end{align*}
conforms to the setting of~\cite{Stute1986}. It follows that
\begin{align*}
\mathbb{g}^{(n)}(x)&=
\frac{
\frac{1}{n}
\sum_{\ell = 1}^n
 \mathds{1}_{\{X_i^\ell = \alpha_i,\,i=1,\dots,d_1\}}  \mathds{1}_{\{X_i^\ell \notin \alpha,\, i=d_1+1,\dots, d \}}\frac{1}{a_n^{d-d_1}}\tilde{K}\left( \frac{x^c-X^{\ell,c}}{a_n}\right)
 }
 {
 \frac{1}{n}\sum_{\ell=1}^n\frac{1}{a_n^{d-d_1}}\tilde{K}\left( \frac{x^c-X^{\ell,c}}{a_n}\right)
 }\\
 &\quad \times \frac{1}{n}\sum_{\ell=1}^n\frac{1}{a_n^{d-d_1}}\tilde{K}\left( \frac{x^c-X^{\ell,c}}{a_n}\right)\\
&\overset{\text{a.s.}}{\to} g^{a|c}(x^a|x^c) g^c(x^c)=g(x), \quad n \to \infty,
\end{align*}
by an application of Theorem~3 in~\cite{Stute1986} to $\mathds{1}_{\{X_i^\ell = \alpha_i,\,i=1,\dots,d_1\}}  \mathds{1}_{\{X_i^\ell \notin \alpha,\, i=d_1+1,\dots, d \}}$. In similar fashion, using the previous result, the moment condition on $1$, and applying the aforementioned theorem again, we obtain that
\begin{align*}
&\frac{1}{n}\frac{\sum_{\ell=1}^n Y^\ell\: \mathbb{g}^{(n,\ell)}(x)}{\mathbb{g}^{(n)}(x)} \\
&=\frac{
\frac{1}{n}
\sum_{\ell = 1}^n Y^\ell\,
 \mathds{1}_{\{X_i^\ell = \alpha_i,\,i=1,\dots,d_1\}}  \mathds{1}_{\{X_i^\ell \notin \alpha,\, i=d_1+1,\dots, d \}}\frac{1}{a_n^{d-d_1}}\tilde{K}\Big( \frac{x^c-X^{\ell,c}}{a_n}\Big)
 }{\frac{1}{n}\sum_{\ell=1}^n\frac{1}{a_n^{d-d_1}}\tilde{K}\Big( \frac{x^c-X^{\ell,c}}{a_n}\Big)
}\\
 &\quad\times 
 \frac{\frac{1}{n}\sum_{\ell=1}^n\frac{1}{a_n^{d-d_1}}\tilde{K}\Big( \frac{x^c-X^{\ell,c}}{a_n}\Big)
}{\frac{1}{n}
\sum_{\ell = 1}^n
 \mathds{1}_{\{X_i^\ell = \alpha_i,\,i=1,\dots,d_1\}}  \mathds{1}_{\{X_i^\ell \notin \alpha,\, i=d_1+1,\dots, d \}}\frac{1}{a_n^{d-d_1}}\tilde{K}\Big( \frac{x^c-X^{\ell,c}}{a_n}\Big)}\\
&\overset{\text{a.s.}}{\to} \frac{\amsmathbb{E}[Y\mathds{1}_{\{X_i^\ell = \alpha_i,\,i=1,\dots,d_1\}} \, | \, X^c=x^c]g^c(x^c)}{g(x)}=\amsmathbb{E}[Y \, | \, X = x], \quad n \to \infty,
\end{align*}
as desired.
\end{proof}

\section{Proofs of normality}\label{appC}
\begin{proof}[Proof of Proposition~\ref{prop:fidis1}]
The proof relies on a bias-variance decomposition. Set
\begin{align*}
g^{(n)}(x)&=\int g(u) \mathbb{k}^{(n)}(u,x) \, \lambda(\mathrm{d}u), \\
p^{(\texttt{c},n)}_{j}(t|x)&=\frac{\int p_j^{\texttt{c}}(t|u)g(u) \mathbb{k}^{(n)}(u,x) \, \lambda(\mathrm{d}u)}{g^{(n)}(x)}, \\
p^{(\texttt{c},n)}_{jk}(t|x)&=\frac{\int p_{jk}^{\texttt{c}}(t|u)g(u) \mathbb{k}^{(n)}(u,x) \, \lambda(\mathrm{d}u)}{g^{(n)}(x)}.
\end{align*}
First, notice that $\amsmathbb{E}[\mathbb{g}^{(n)}(x)]=g^{(n)}(x)$, while, uniformly in $t$, $\amsmathbb{E}[\amsmathbb{N}^{(n)}_{jk}(t|x)]=p^{(\texttt{c},n)}_{jk}(t|x)+O((na_n^{d})^{-1})$ and $\amsmathbb{E}[\amsmathbb{I}^{(n)}_{jk}(t|x)]=p^{(\texttt{c},n)}_{j}(t|x)+O((na_n^{d})^{-1})$.

By Assumption~\ref{ass:smooth}, we may write the multivariate Taylor expansion under the integral sign, so with $p_{jk}^{\texttt{c}}(t,u):= p_{jk}^{\texttt{c}}(t|u)g(u)$, we get
\begin{align*}
p^{(\texttt{c},n)}_{jk}(t|x)
=\frac{ p_{jk}^{\texttt{c}}(t,x)}{g^{(n)}(x)}+\frac{a_n^2}{2} \sum_{m=1}^{d}\frac{\partial^2 p_{jk}^{\texttt{c}}(t,x)}{\partial x_m^2}\,\frac{\int u^2_m \mathbb{k}^{(n)}(u,x)g(u) \, \lambda(\mathrm{d}u)}{g^{(n)}(x)}+o(a_n^2),
\end{align*}
where the linear term has vanished due Assumption~\ref{ass:band_kernel_add}(b). Thus, by a simple Taylor expansion of $g^{(n)}(x)$, we further get
\begin{align*}
p^{(\texttt{c},n)}_{jk}(t|x)-p^{\texttt{c}}_{jk}(t|x)=O(a_n^2)
\end{align*}
uniformly in $t$, and an analogous expressions holds for the difference $p^{(\texttt{c},n)}_{j}(t|x)-p^{\texttt{c}}_{j}(t|x)$.

It follows by Slutsky's theorem that is suffices to establish the normality of the finite dimensional distributions of the conditional empirical processes 
\begin{align*}
\tilde\xi_{jk}^{(n)}(t|x)&=(na_n^{d})^{1/2}\Big(\amsmathbb{N}^{(n)}_{jk}(t|x)-p_{jk}^{(\texttt{c},n)}(t | x)\Big),\\
\tilde\xi_{j}^{(n)}(t|x)&=(na_n^{d})^{1/2 }\Big(\amsmathbb{I}^{(n)}_{j}(t|x)-p_{j}^{(\texttt{c},n)}(t | x)\Big),
\end{align*}
since the remaining bias terms go to zero whenever $(na_n^{d})^{1/2} a_n^2=(na_n^{d+4})^{1/2}\to0$, which is equivalent to Assumption~\ref{ass:band_kernel_add}(a).

We proceed to show convergence of the variance term. This is best handled in a joint rather than a conditional manner. To this end, we define $p^{(\texttt{c},n)}_{jk}(t,x)=p^{(\texttt{c},n)}_{jk}(t|x)g^{(n)}(x)$ and $\amsmathbb{N}^{(n)}_{jk}(t,x)=\amsmathbb{N}^{(n)}_{jk}(t|x)\mathbb{g}^{(n)}(x)$ and similarly for $\tilde\xi_{jk}^{(n)}(t,x)$ and $\tilde\xi_{j}^{(n)}(t,x)$. We also introduce
\begin{align*}
\tilde\xi^{(n)}(x)=(na_n^{d})^{1/2}\big(\mathbb{g}^{(n)}(x)-g^{(n)}(x)\big).
\end{align*}
Let $t_1<\cdots<t_M$ and $c_m,d_m$, $m=1,\ldots,M$ satisfy $\sum_{m=1}^M (c_m^2+d_m^2)>0.$ Then the quantities $n^{1/2}\tilde\xi^{(n)}(x)$ and
\begin{align*}
&n^{1/2}\sum_m \left(c_m\tilde\xi_{jk}^{(n)}(t_m,x) + d_m \tilde\xi_{j}^{(n)}(t_m,x)\right)\\
&=a_n^{d/2}\sum_m \sum_{\ell=1}^n \Big(c_m\left\{N_{jk}^\ell(t_m\wedge R^\ell)\mathbb{g}^{(n,\ell)}(x) - p^{(\texttt{c},n)}_{jk}(t_m,x) \right\}\\
&\quad\quad\quad\quad\quad\quad\quad +d_m \left\{\mathds{1}_{\{t_m < R^\ell\}} \mathds{1}_{\{Z^\ell_{t_m} = j\}} \mathbb{g}^{(n,\ell)}(x)-p^{(\texttt{c},n)}_{j}(t_m,x)\right\} \Big),
\end{align*}
are sums of i.i.d.\ terms with means exactly zero. Notice that each of the terms in the parenthesis in the last sum is of order $O(a_n^{-d/2})$. To check that the limit of the sum is non-degenerate, we examine the covariance functions, term by term. In what follows, we apply a Taylor expansion and use the convergence of deterministic terms as per Theorem~2 in~\cite[pp.\ 62-63]{Stein1970}.

We first obtain
\begin{align*}
&\text{Cov}\bigg(\sum_m \Big(c_m\tilde\xi_{jk}^{(n)}(t_m,x) + d_m \tilde\xi_{j}^{(n)}(t_m,x)\Big), \tilde\xi^{(n)}(x) \bigg)\\
&=\sum_m \bigg(c_m \text{Cov}\Big(\tilde\xi_{jk}^{(n)}(t_m,x) ,\tilde\xi^{(n)}(x)  \Big) + d_m \text{Cov}\Big(\tilde\xi_{j}^{(n)}(t_m,x) ,\tilde\xi^{(n)}(x) \Big)\bigg)\\
&=a_n^{d} \sum_m c_m 
\int p_{jk}^{\texttt{c}}(t_{m} | u)g(u) \mathbb{k}^{(n)}(u,x)^2 \lambda(\mathrm{d}u)\\
&\quad-a_n^{d} \sum_m c_m 
\bigg(\int p_{jk}^{\texttt{c}}(t_{m} | u)g(u) \mathbb{k}^{(n)}(u,x) \, \lambda(\mathrm{d}u)\bigg)\bigg(\int g(u) \mathbb{k}^{(n)}(u,x) \, \lambda(\mathrm{d}u)\bigg) \\
&\quad+a_n^{d} \sum_m d_m 
\int p_{j}^{\texttt{c}}(t_{m} | u)g(u) \mathbb{k}^{(n)}(u,x)^2 \lambda(\mathrm{d}u)\\
&\quad-a_n^{d} \sum_m d_m 
\bigg(\int p_{j}^{\texttt{c}}(t_{m} | u)g(u) \mathbb{k}^{(n)}(u,x) \, \lambda(\mathrm{d}u)\bigg)\bigg(\int g(u) \mathbb{k}^{(n)}(u,x)\, \lambda(\mathrm{d}u)\bigg),
\end{align*}
which is of order 
\begin{align*}
&\sum_{m} \big(c_{m}\phi(x)p_{jk}^{\texttt{c}}(t_{m} | x)g(x)^2+d_{m} \phi(x)p_{j}^{\texttt{c}}(t_{m} | x)g(x)^2\big)+o(1).
\end{align*}
Similarly,
\begin{align*}
&\text{Var}\bigg(\sum_m \Big(c_m\tilde\xi_{jk}^{(n)}(t_m,x) + d_m \tilde\xi_{j}^{(n)}(t_m,x)\Big)\bigg)\\
&=\sum_{m_1,m_2} \big(c_{m_1}c_{m_2}\phi(x)p_{jk}^{\texttt{c}}(t_{m_1},t_{m_2} | x)g(x)^2+d_{m_1}d_{m_2} \phi(x)p_{j}^{\texttt{c}}(t_{m_1},t_{m_2} | x)g(x)^2\big) \\
&\quad+\sum_{m_1,m_2} (c_{m_1}d_{m_2} \phi(x) p_{jk}^{3,\texttt{c}}(t_{m_1},t_{m_2} | x)g^2(x)+c_{m_2}d_{m_1} \phi(x) p_{jk}^{3,\texttt{c}}(t_{m_2},t_{m_1} | x)g^2(x)) + o(1)
\end{align*}
as well as
\begin{align*}
\text{Var}\Big(\tilde\xi^{(n)}(x) \Big)
=
a_n^{d} \sum_m  \bigg(
\int \mathbb{k}^{(n)}(u,x)^2 \lambda(\mathrm{d}u)-
\Big(\int \mathbb{k}^{(n)}(u,x) \, \lambda(\mathrm{d}u)\Big)^{\!2}\bigg)
=g(x)^2\phi(x)+o(1).
\end{align*}
This confirms non-degeneracy. It follows by the {Berry--}Esseen theorem, using Assumption~\ref{ass:bounded} that the above linear combinations are asymptotically normal. But then, by the Cramer--Wold device, so are the finite-dimensional distributions of the joint empirical processes. {Upon division, and using Slutsky's theorem, we obtain the required covariance expressions for the conditional processes.}

The actual calculation of the limiting covariance processes is straightforward, following the above expansions.
\end{proof}

\begin{proof}[Proof of Proposition~\ref{prop:asnorm_comt_joint}]

It suffices, due to~\eqref{eq:decomp} and~\eqref{eq:indicator_decomp}, to show weak convergence of $\big(\xi_{jk}^{(n)}(t|x)\big)_{t\in[0,\theta_x], j,k \in \mathcal{Z} : j \neq k}$ together with the empirical processes associated with the estimators $\amsmathbb{C}_j^{(n)}(\cdot | x)$, $j\in \mathcal{Z}$. Convergence of the joint finite-dimensional distributions follows similarly to Proposition~\ref{prop:fidis2}, so only asymptotic tightness (or asymptotic equicontinuity) of each individual process is required. To this end, we apply Theorem~\ref{clt_changingfunc} for $\xi_{jk}^{(n)}(\cdot|x)$, the other case following analogously.

Consider the i.i.d.\ sequence $(\mathcal{X}^\ell)_{\ell = 1}^n$ of separable processes given by
\begin{align*}
\mathcal{X}^\ell=(X^\ell,(Z^\ell_t)_{0\le t\le R^\ell}, \tau^\ell \wedge R^\ell).
\end{align*}
We apply Theorem~\ref{clt_changingfunc} with $T=[0,\theta_x]$, 
\begin{align*}
f_{n,t}(\mathcal{X}^\ell)=\frac{1}{a_n^{d/2}}\prod_{i=1}^d K_i\Big(\frac{x_i-X_i^\ell}{a_n}\Big) N_{jk}^\ell(t \wedge R^\ell),
\end{align*}
and envelope function
\begin{align*}
{F_n(\mathcal{X})}=\frac{1}{a_n^{d/2}}\prod_{i=1}^d K_i\Big(\frac{x_i-X_i}{a_n}\Big) N_{jk}({\theta_x} \wedge R).
\end{align*}
For this, we need to verify conditions~\eqref{changingclt_12} and~\eqref{changingclt_4} for a partition chosen independent of $n$.

We shall require to an analysis of the behavior of the $\mu ||{F_n(\mathcal{X})}||_{2}$-bracketing number, but this does not cause complications, since
\begin{align}\label{eq:help}
\mu^2 ||{F_n(\mathcal{X})}||^2_{2}=\mu^2 \phi(x) g^2(x) p^{2,c}_{jk}(\theta|x)+o(1),
\end{align}
where the term accompanying $\mu^2$ is constant in $n$ and strictly positive (whenever relevant). Consequently, we may just use our analysis for the $\mu$-case.

From~\eqref{eq:help}, it holds that $||{F_n(\mathcal{X})}||^2_{2} = O(1)$. Let $\eta>0$. We may calculate
\begin{align*}
&\amsmathbb{E}[{F_n(\mathcal{X})}^2 \mathds{1}_{\{{F_n(\mathcal{X})} > \eta \sqrt{n}\}}]\\
&=
a_n^d \int_{\amsmathbb{R}^d}
\mathbb{k}^{(n)}(u,x)^2 \amsmathbb{E}\big[N_{jk}({\theta_x} \wedge R)^2 \mathds{1}_{\{K((x-u)a_n^{-1}) N_{jk}({\theta_x}) > \eta \sqrt{a_n^d n}\}} \, \big| \, X = u\big] g(u) \, \lambda(\mathrm{d}u).
\end{align*}
Since the kernel is assumed to have bounded variation and support, it is in particular bounded. Recall that $N_{jk}({\theta_x})$ is square integrable. Then, by dominated convergence and the fact that $a_n^d n \to \infty$, 
\begin{align*}
\lim_{n\to\infty}\amsmathbb{E}\big[N_{jk}({\theta_x} \wedge R)^2 \mathds{1}_{\{K((x-u)a_n^{-1}) N_{jk}({\theta_x}) > \eta \sqrt{a_n^d n}\}} \, \big| \, X = u\big]
= 
0.
\end{align*}
This verifies~\eqref{changingclt_12}.

Finally, we calculate the bracketing number. Let $0 = t_0 < t_1 < \cdots < t_M = {\theta_x}$ and note that
\begin{align*}
\sum_{\ell=1}^n \amsmathbb{E}\bigg[ \sup_{t_{\ell-1} < s \leq t \leq t_\ell}\Big|f_{n,t}(\mathcal{X}^\ell)-f_{n,s}(\mathcal{X}^\ell) \Big|^2 \bigg]
=
\phi(x)g^2(x) \sum_{\ell=1}^n p_{jk}^{2,\texttt{c}}(t_\ell,t_{\ell-1} | x) + o(1).
\end{align*}
Following closely the chain of arguments in Example~2.11.16 of~\cite{VaartWellner1996}, there exists a constant $C>0$ independent of $n$ such that, with less than $C/\mu^2$ terms in the partition, it holds that
\begin{align*}
\phi(x)g^2(x) \sum_{\ell=1}^n p_{jk}^{2,\texttt{c}}(t_\ell,t_{\ell-1} | x) \leq \mu^2/2.
\end{align*}
This in particular implies that, for $n$ sufficiently large, the bracketing number is less than $C/\mu^2$. Consequently, for $n$ large enough,
\begin{align*}
\int_0^{\delta_n}\sqrt{\log{N_{[\:]}(\mu,\mathcal{F}_n,L_2(\amsmathbb{P}(\mathcal{X}^1)))}} \, \mathrm{d}\mu
\leq
\int_0^{\delta_n} \sqrt{\log{C/\mu^2}} \, \mathrm{d}\mu.
\end{align*}
Using that $\sqrt{y}/\log{Cy} \to \infty$ for $y \to \infty$, for $n$ large enough it holds that
\begin{align*}
\int_0^{\delta_n} \sqrt{\log{C/\mu^2}} \, \mathrm{d}\mu
\leq
\int_0^{\delta_n} \frac{1}{\sqrt{\mu}} \, \mathrm{d}\mu
=
2 \sqrt{\delta_n} \to 0
\end{align*}
as desired. This verifies~\eqref{changingclt_4}. The conclusion is that
\begin{align*}
\Big((na_n^d)^{1/2}\big(\amsmathbb{N}^{(n)}_{jk}(t | x)-\amsmathbb{E}[\amsmathbb{N}^{(n)}_{jk}(t | x)]\big)\Big)_{\!t \in [0,\theta_x]}
\end{align*}
is asymptotically equicontinuous. To transfer this result to the empirical process of interest, we need the resulting bias term to be negligible. This follows from Assumption~\ref{ass:band_kernel_add}(a), see also the first part of the proof of Proposition~\ref{prop:fidis1}.

\end{proof}

\begin{proof}[Proof of Theorem~\ref{thm:asnorm}]
By the almost sure representation theorem, we may assume that, almost surely on the supremum norm on $[0,{\theta_x}]\times[0,{\theta_x}]$,
\begin{align*}
(\xi_{jk}^{(n)}(t|x),\xi_{j}^{(n)}(t|x))_{t\in[0,{\theta_x}]}
\to
(\xi_{jk}(t|x),\xi_j(t|x))_{t\in[0,{\theta_x}]}.
\end{align*}
Note that
\begin{align*}
&\zeta_{jk}^{(n,\varepsilon)}(t|x)\\
 &= (na_n^{d})^{1/2}\int_0^t \bigg( \frac{1}{\amsmathbb{I}^{(n)}_j(s-|x) \vee \varepsilon} - \frac{1}{p_j^{\texttt{c}}(s-|x) \vee \varepsilon}\bigg) \, \amsmathbb{N}_{jk}^{(n)}(\mathrm{d}s | x) \\
 & \quad + (na_n^{d})^{1/2}\int_0^t \frac{1}{p_j^{\texttt{c}}(s-|x) \vee \varepsilon} \Big( \amsmathbb{N}_{jk}^{(n)}(\mathrm{d}s | x) 
- p_{jk}^{\texttt{c}}(\mathrm{d}s|x)\Big)\\
 &=-\big(A_n(t)+B_n(t)\big) +  C_n(t),
\end{align*}
where, almost surely on the supremum norm on $[0,{\theta_x}]$,
\begin{align*}
 A_n(t) &:= \int_0^t \frac{(na_n^{d})^{1/2}\Big(\amsmathbb{I}^{(n)}_{j}(s-|x)\vee \varepsilon-p_{j}^{\texttt{c}}(s- | x)\vee \varepsilon\Big)}{p_j^{\texttt{c}}(s-|x) \vee \varepsilon} \, \Lambda^{(\varepsilon)}_{jk}(\mathrm{d}s | x) \\
&\to\int_0^t \frac{B^{(\varepsilon)}_j(s- | x)\xi_j(s-|x)}{p_j^{\texttt{c}}(s-|x)} \, \Lambda^{(\varepsilon)}_{jk}(\mathrm{d}s | x) =: A(t), \\
 B_n(t) &:= \int_0^t \frac{(na_n^{d})^{1/2}\Big(\amsmathbb{I}^{(n)}_{j}(s-|x)\vee \varepsilon-p_{j}^{\texttt{c}}(s- | x)\vee \varepsilon\Big)}{p_j^{\texttt{c}}(s-|x) \vee \varepsilon} \Big(\mathbb{\Lambda}^{(n,\varepsilon)}_{jk}(\mathrm{d}s|x)- \Lambda^{(\varepsilon)}_{jk}(\mathrm{d}s | x)\Big)\to 0, \\
 C_n(t)&:=\int_0^t \frac{1}{p_j^{\texttt{c}}(s-|x) \vee \varepsilon} \, \xi^{(n)}_{jk}(\mathrm{d}s | x)\to \int_0^t \frac{1}{p_j^{\texttt{c}}(s-|x) \vee \varepsilon} \,\xi_{jk}(\mathrm{d}s | x) =: C(t).
\end{align*}
Thus we get the limiting process $\zeta_{jk}^{(\varepsilon)}(t|x)=-A(t)+C(t)$ as desired.

The covariances are calculated by noticing that
\begin{align*}
\zeta_{jk}^{(\varepsilon)}(t|x)=C^1(t)-C^2(t)-A(t),
\end{align*}
where
\begin{align*}
 C^1(t):= \frac{\xi_{jk}(t | x)}{p_j^{\texttt{c}}(t|x) \vee \varepsilon},\quad C^2(t):= \int_0^t \xi_{jk}(s | x) \, \mathrm{d}\Big(\frac{1}{p_j^{\texttt{c}}(s|x) \vee \varepsilon}\Big),
\end{align*}
and calculating all the cross terms using Fubini's theorem and as follows:
\begin{align*}
\phi(x)^{-1}\text{Cov}&(\zeta_{jk}^{(\varepsilon)}(t|x),\zeta_{jk}^{(\varepsilon)}(s|x)) =\frac{p^{\texttt{c}}_{jk}(s,t|x)}{(p_j^{\texttt{c}}(t|x) \vee \varepsilon)(p_j^{\texttt{c}}(s|x) \vee \varepsilon)}\\
&+ \int_0^t\int_0^s p^{\texttt{c}}_{jk}(u,v|x) \, \mathrm{d}\Big(\frac{1}{p_j^{\texttt{c}}(u|x) \vee \varepsilon}\Big)\mathrm{d}\Big(\frac{1}{p_j^{\texttt{c}}(v|x) \vee \varepsilon}\Big)\\
&+\int_0^t \int_0^s \frac{B^{(\varepsilon)}_j(u- | x)B^{(\varepsilon)}_j(v- | x) p^{\texttt{c}}_{j}(u,v|x)}{(p_j^{\texttt{c}}(u-|x) \vee \varepsilon)(p_j^{\texttt{c}}(v-|x) \vee \varepsilon)} \, \Lambda^{(\varepsilon)}_{jk}(\mathrm{d}u | x)\Lambda^{(\varepsilon)}_{jk}(\mathrm{d}v | x)\\
&- \frac{1}{p_j^{\texttt{c}}(t|x) \vee \varepsilon}  \int_0^s p^{\texttt{c}}_{jk}(t,u|x) \, \mathrm{d}\Big(\frac{1}{p_j^{\texttt{c}}(u|x) \vee \varepsilon}\Big)\\
&- \frac{1}{p_j^{\texttt{c}}(t|x) \vee \varepsilon} \int_0^s \frac{B^{(\varepsilon)}_j(u- | x) p^{3,\texttt{c}}_{jk}(t,u|x)}{p_j^{\texttt{c}}(u-|x) \vee \varepsilon} \, \Lambda^{(\varepsilon)}_{jk}(\mathrm{d}u | x) \\
&+\int_0^t \int_0^s   \frac{B^{(\varepsilon)}_j(u- | x)p^{3,\texttt{c}}_{jk}(v,u|x)}{p_j^{\texttt{c}}(u-|x) \vee \varepsilon} \, \Lambda^{(\varepsilon)}_{jk}(\mathrm{d}u | x)\,\mathrm{d}\Big(\frac{1}{p_j^{\texttt{c}}(v|x) \vee \varepsilon}\Big) \\
&- \frac{1}{p_j^{\texttt{c}}(s|x) \vee \varepsilon}  \int_0^t p^{\texttt{c}}_{jk}(s,u|x) \, \mathrm{d}\Big(\frac{1}{p_j^{\texttt{c}}(u|x) \vee \varepsilon}\Big)\\
&- \frac{1}{p_j^{\texttt{c}}(s|x) \vee \varepsilon} \int_0^t \frac{B^{(\varepsilon)}_j(u- | x)p^{3,\texttt{c}}_{jk}(s,u|x)}{p_j^{\texttt{c}}(u-|x) \vee \varepsilon} \, \Lambda^{(\varepsilon)}_{jk}(\mathrm{d}u | x) \\
&+\int_0^s \int_0^t   \frac{{B^{(\varepsilon)}_j(u- | x)}p^{3,\texttt{c}}_{jk}(v,u|x)}{p_j^{\texttt{c}}(u-|x) \vee \varepsilon} \, \Lambda^{(\varepsilon)}_{jk}(\mathrm{d}u | x)\,\mathrm{d}\Big(\frac{1}{p_j^{\texttt{c}}(v|x) \vee \varepsilon}\Big).
\end{align*}
This completes the proof.
\end{proof}

\begin{proof}[Proof of Theorem~\ref{thm:asnorm2}]
By virtue of Theorem~\ref{thm:asnorm}, and the well-known fact that the functional
\begin{align*}
z\mapsto p\Prodi_0^\cdot \big(\text{Id} + \mathrm{d}z\big)
\end{align*}
has Hadamard derivative given by
\begin{align*}
h\mapsto p\int_0^\cdot\Prodi_0^s \big(\text{Id} + \mathrm{d}z \big) \mathrm{d}h 	\Prodi_s^\cdot \big(\text{Id} + \mathrm{d}z \big),
\end{align*}
we may invoke the Functional Delta Method, see Theorem~20.8 in~\cite{Vaart1998}, to obtain that, simultaneously for all $t$,
\begin{align*}
&\gamma^{(n,\varepsilon)}(t|x)
\stackrel{\mathcal{D}}{\to}p^{(\varepsilon)}(0 | x)\int_0^t\Prodi_0^s \big(\text{Id} +  \Lambda^{(\varepsilon)}(\mathrm{d}u | x) \big) 
\zeta^{(\varepsilon)}(\mathrm{d}s|x)
\Prodi_s^t \big(\text{Id} +  \Lambda^{(\varepsilon)}(\mathrm{d}u | x) \big)
\end{align*}
as desired.
\end{proof}

\section{Bracketing central limit theorem}\label{ap:technical}

In the following, the set of all càdlàg functions $z : [a,b]\to \amsmathbb{R}$ for $a,b\in \bar{\amsmathbb{R}}$ is denoted $\amsmathbb{D}[a,b]$, while the corresponding set of all continuous and bounded functions is denoted $\amsmathbb{C}([a,b])$. The results below follow from combining results from~\cite{billingsley2013convergence} and~\cite{VaartWellner1996}.

\begin{theorem}[Asymptotic equicontinuity with uniform modulus of continuity]\label{equicon}
Assume that a sequence of $\amsmathbb{D}[a,b]$-valued stochastic processes $(\amsmathbb{X}_n)$ has converging finite-dimensional distributions. If moreover, for any $\varepsilon>0$,
\begin{align*}
\lim_{\delta\to0} \limsup_{n\to\infty}\amsmathbb{P}\bigg(\sup_{\substack{0\le s\le t \le 1 \\ |s-t|\le \delta}}|\amsmathbb{X}_n(t)-\amsmathbb{X}_n(s)|>\varepsilon\bigg)=0,
\end{align*}
then there exists a stochastic process $\amsmathbb{X}$ with $\amsmathbb{P}(\amsmathbb{X}\in \amsmathbb{C}[a,b]) = 1$ such that $\amsmathbb{X}_n\stackrel{\mathcal{D}}{\to}\amsmathbb{X}$ in $\amsmathbb{D}[a,b]$ endowed with the $J_1$-topology.
\end{theorem}

\begin{theorem}[Bracketing {central limit theorem} for separable processes]\label{clt_changingfunc} Let $(Z_{i})$ be a sequence of i.i.d. separable stochastic processes. For each $n$, let $\mathcal{F}_{n}=\{f_{n,t}:t\in T\}$ be a class of measurable functions into $\amsmathbb{R}$ indexed by a totally bounded semimetric space $(T,\rho)$. Define for every $n$ the bracketing number $N_{[\:]}(\varepsilon ||{F_{n}(Z_1)}||_{L_{2}}, \mathcal{F}_{n}, L_{2}(P))$ as the minimal number of sets $N_{\varepsilon}$ in a partition $\mathcal{F}_{n}=\bigcup_{j=1}^{N_{\varepsilon}}F_{\varepsilon_{j}}^{n}$ of the index set into sets $\mathcal{F}_{\varepsilon_{j}}^{n}$ such that for every one of these sets it holds that   
\begin{align*}
    \sum_{i=1}^{n}\amsmathbb{E}\bigg[\sup_{f,g\in \mathcal{F}^{n}_{\varepsilon_{j}}}|f_{n,t}(Z_{i})-f_{n,s}(Z_{i})|^{2}\bigg]\leq \varepsilon^{2}||{F_{n}(Z_1)}||_{L_{2}}^{2}, 
\end{align*}
where $F_{n}$ is a measurable envelope function of $\mathcal{F}_{n}$, that is a measurable function for which it holds that $|f_{n,t}|\leq F_{n}$ for all ${f_{n,t}}\in \mathcal{F}_{n}$.
Suppose that
\begin{align}
    &\amsmathbb{E}\left[{F_{n}(Z_1)}^{2}\right]=O(1) \quad \text{and} \quad
    \amsmathbb{E}\left[F_{n}^{2}1_{({F_{n}(Z_1)}>\eta \sqrt{n})}\right]\to 0 \quad \text{for every }\eta >0\label{changingclt_12},
    \\
    &\sup_{\rho(s,t)\leq \delta_{n}}\amsmathbb{E}\left[\left({f_{n,t}(Z_1)}-{f_{n,s}(Z_1)}\right)^{2}\right]\to 0 \quad \text{for every }\delta_{n}\to 0\label{changingclt_3},
    \\
    &\int_{[0,\delta_{n}]}\sqrt{\log\left(N_{[\:]}(\varepsilon||{F_{n}(Z_1)}||_{L_{2}}, \mathcal{F}_{n}, L_{2}(\amsmathbb{P}))\right)}\mathrm{d}\varepsilon \to 0 \quad\text{for every }\delta_{n}\to 0 \label{changingclt_4}.
\end{align}
Then the sequence 
\begin{align*}
\Big(n^{-1/2}\sum_{i=1}^n\big(f_{n,t}({Z_i})-\amsmathbb{E}[f_{n,t}({Z_1})]\big)\Big)_{f_{n,t}\in \mathcal{F}_{n}}
\end{align*}
is asymptotically $\rho$-equicontinuous.
If the partitions can be chosen independent of $n$, then the condition of~\eqref{changingclt_3} is unnecessary. 
\end{theorem}
\begin{lemma}\label{separable_measurable}
Assume $\amsmathbb{X}$ is a separable stochastic process indexed by the set $\mathcal{F}$. Then $||\amsmathbb{X}||_\mathcal{F}=\sup\{f\in\mathcal{F} : \amsmathbb{X}(f)\}$ is measurable.
\end{lemma}
Since all càdlàg stochastic processes indexed by an interval $[a,b]\subset \amsmathbb{R}$ are separable, all processes treated in this paper are separable. Thus, by Lemma~\ref{separable_measurable} the supremum  process is measurable and may hence serve as the measurable envelope in Theorem~\ref{clt_changingfunc}. Finally, when taking $\rho$ as the Euclidean distance, we may obtain from concatenating Theorems~\ref{clt_changingfunc} and~\ref{equicon} a bracketing central limit theorem for convergence in $\amsmathbb{D}[0,\theta_x]$ endowed the $J_1$-topology. In that case, the limit is a tight Gaussian process.

\end{appendix}

\bibliographystyle{imsart-nameyear} 
\bibliography{main.bib}       

%
%
%

\newpage
\setcounter{page}{1}
\pagenumbering{roman} 

{
\section*{Supplementary Material}

The purpose of this section is to showcase the finite sample performance of the conditional Aalen--Johansen estimator in a basic model, namely the illness-death model without recovery. The main goal is to highlight how a duration effect can be targeted by conditioning on an internal covariate; a feature of our estimator that also applies for general state spaces. It should be mentioned that specifically for the illness-death model without recovery, there is a plethora of tailored solutions that may perform better, see~\cite{MunchEtAl2023} and the references therein. The strength of our method lies in its applicability to arbitrary state spaces and data generating processes, without incurring added complications. 

Concretely, we consider a time-inhomogeneous semi-Markov process on the state space $\{1,2,3\}$ with initial state $Z_0 = 1$ and non-zero transition rates given by
\begin{align*}
\lambda_{12}(t, u)
&= 0.09 + 0.0018t, \\
\lambda_{13}(t, u)
&= 0.01 + 0.0002t, \\
\lambda_{23}(t, u)
&= 0.09 + \mathds{1}_{\{u < 4\}}0.20 + 0.001t.
\end{align*}
Note that the functional form of the duration effect is crude. We simulate $1,000$ and $5,000$ independent and identically distributed realizations subject to entirely random right-censoring, respectively. Right-censoring follows the distribution $\text{Unif}(10,40)$. The degrees of censoring are $0.162$ and $0.1756$, respectively.

We first fit the unconditional Aalen--Johansen estimator. For illustrative purposes, we plot in Figure~\ref{fig1} the estimate of the state occupation probability $p_2$ (dashed) together with its true value (full).

\begin{figure}[h!]
\centering
\includegraphics[width=0.48\textwidth]{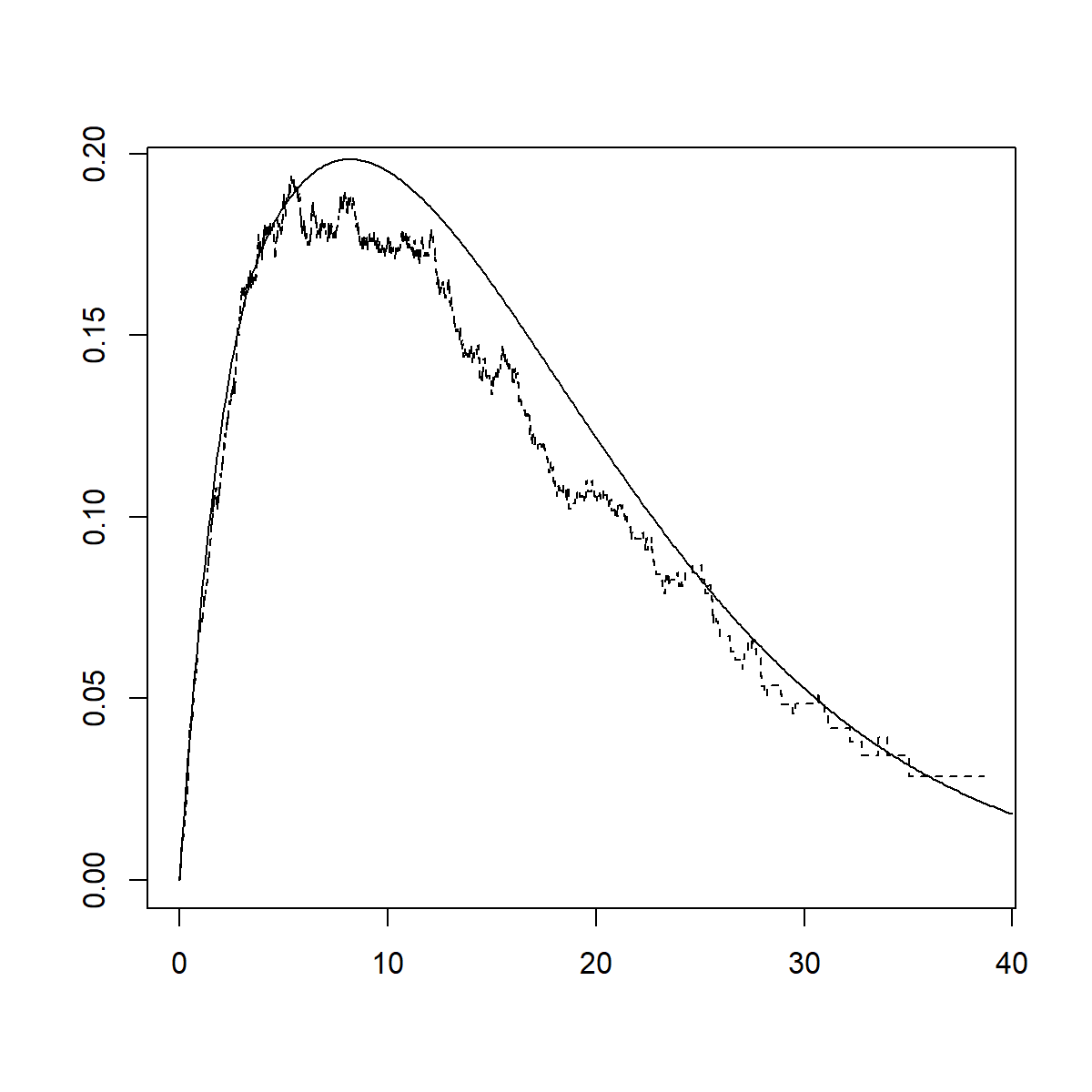}
\includegraphics[width=0.48\textwidth]{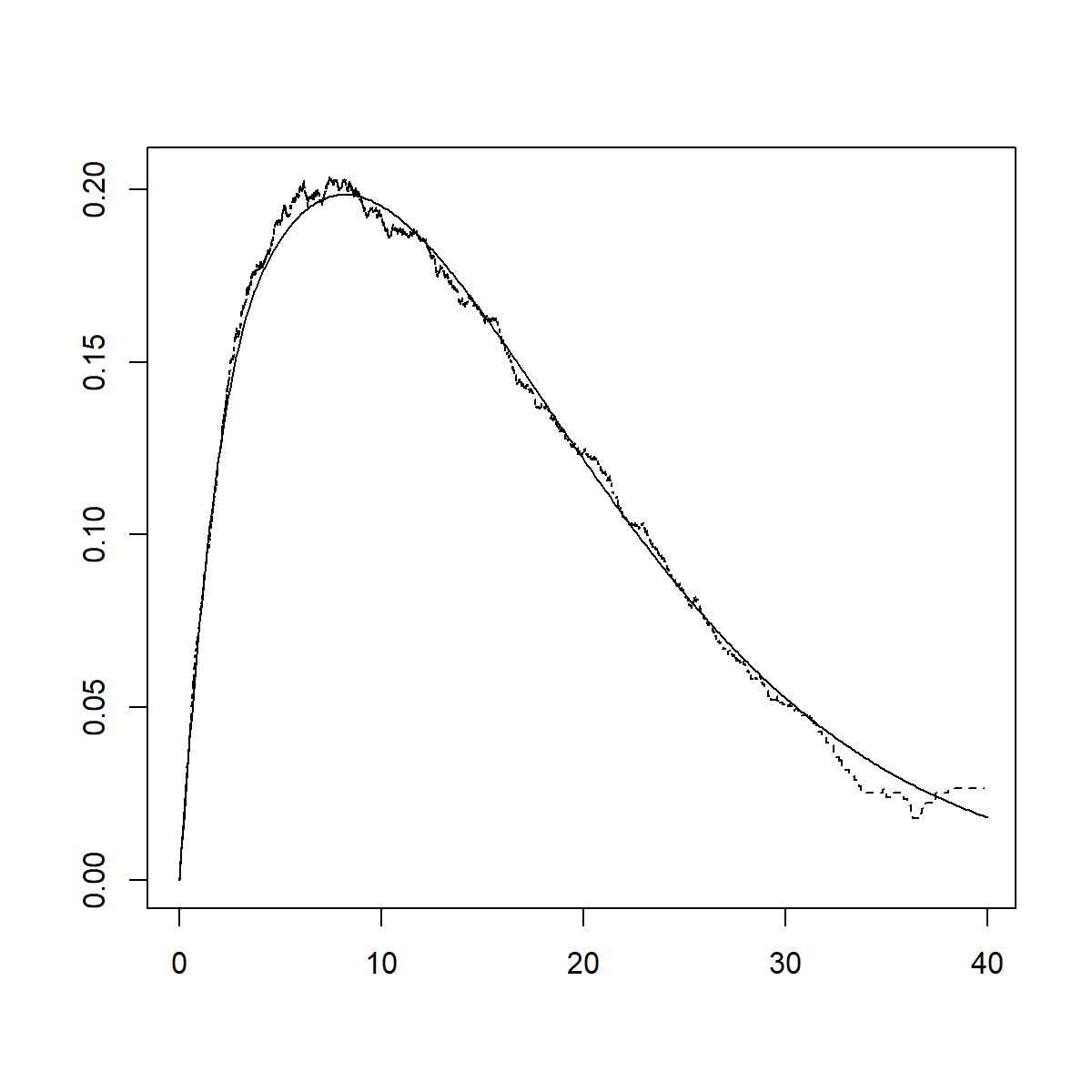}
\caption{Estimate of the state occupation probability $p_2$ (dashed) together with its true value (solid) for $n=1,000$ (left) and $n=5,000$ (right), respectively.}
\label{fig1}
\end{figure}

We next fit the ordinary Aalen--Johansen estimator as well as the conditional Aalen--Johansen estimator, conditioning on the process being in state $2$ at time $10$ (landmarking). This corresponds to estimates of the transition probabilities (conditional occupation probabilities)
\begin{align*}
t \mapsto \amsmathbb{P}(Z_t = j \, | \, Z_{10} = 2).
\end{align*}
The effective sample sizes are $176$ of $1,000$ and $962$ of $5,000$, respectively. We plot in Figure~\ref{fig2} the resulting estimates (dashed) together with the true value (solid) for $j=2$.
\begin{figure}[h!]
\centering
\includegraphics[width=0.48\textwidth]{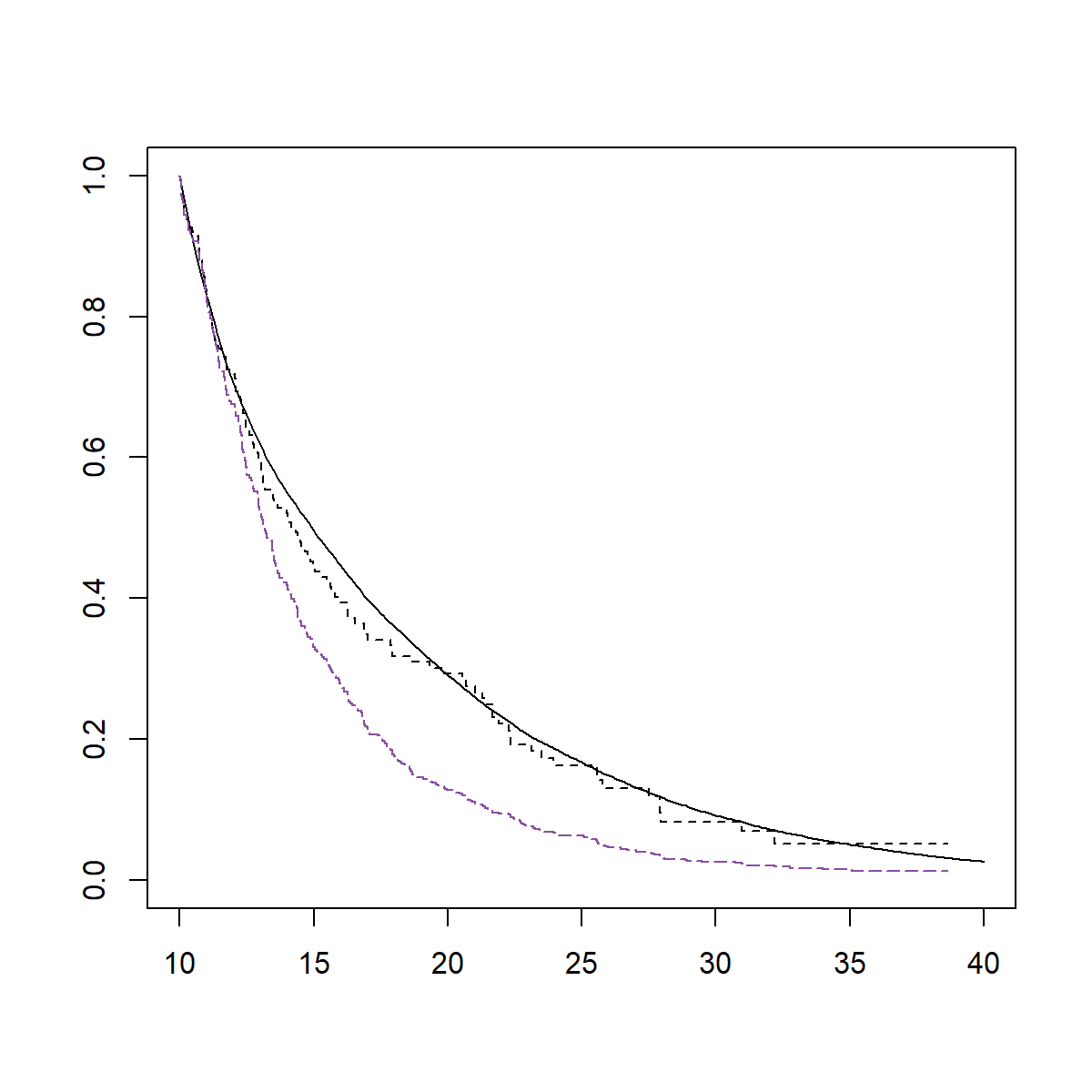}
\includegraphics[width=0.48\textwidth]{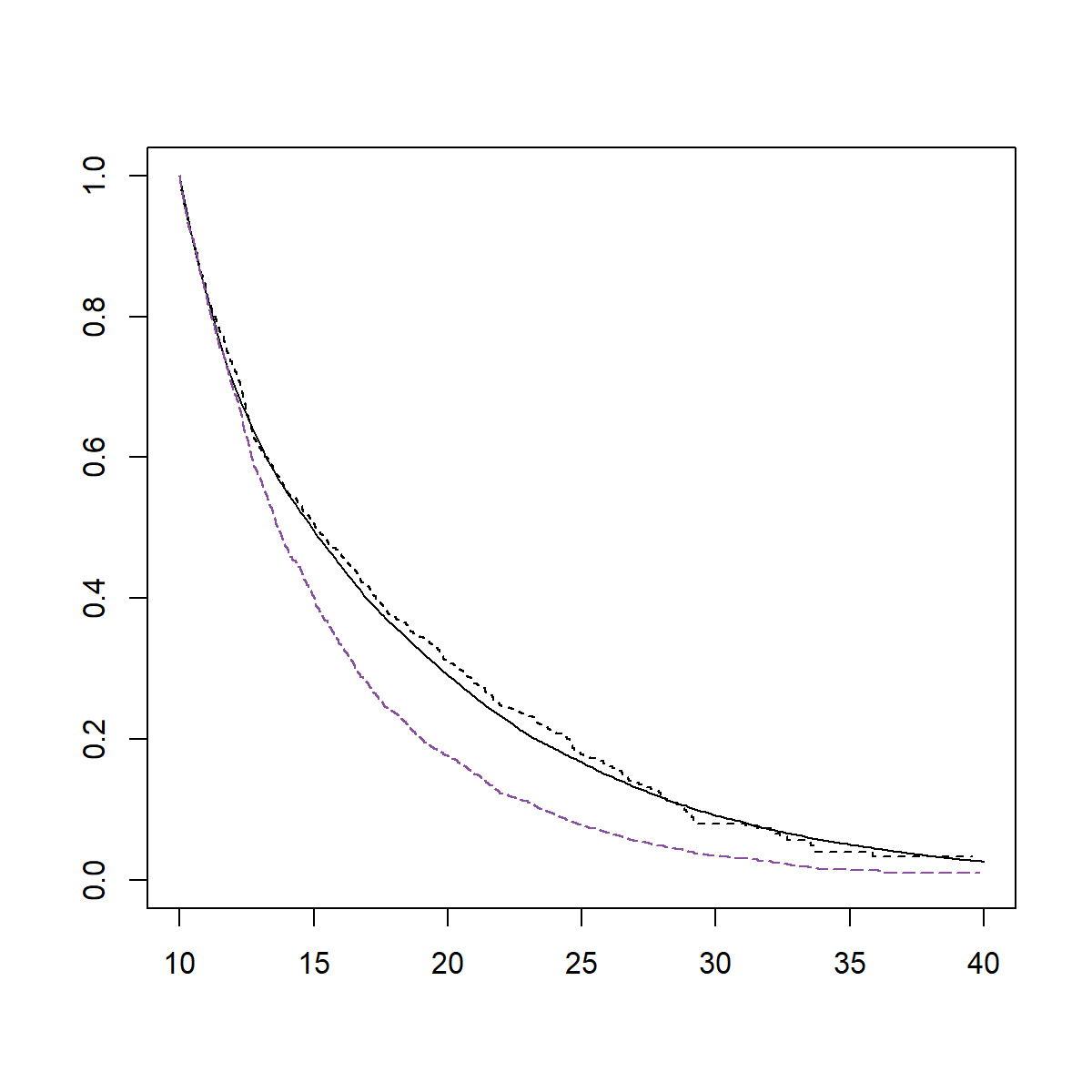}
\caption{Estimates of the transition probability with $j=2$ (dashed) using the ordinary Aalen--Johansen estimator (magenta) and the conditional Aalen--Johansen estimator (black) together with the true value (solid) and for $n=1,000$ (left) and $n=5,000$ (right), respectively.}
\label{fig2}
\end{figure}

Finally, we fit the conditional Aalen--Johansen estimator, conditioning on the process having at time $10$ been in state $2$ for $u=1,5$ units of time, which is continuous internal covariate. We use a uniform kernel. This corresponds to kernel estimates of the transition probabilities
\begin{align*}
t \mapsto \amsmathbb{P}(Z_t = j \, | \, Z_{10} = 2, U_{10} = u),
\end{align*}
where $(U_t)$ is the duration process associated with $Z$. The effective sample sizes for $u=1$ are $56$ of $1,000$ and $166$ of $5,000$, respectively, while the effective samples sizes for $u=5$ are $28$ of $1,000$ and $93$ of $5,000$, respectively. Bandwidth selection was performed according to the marginal distribution of the internal covariate, $U_{10}$, implementing the method of~\cite{SheatherJones1991}, which is also the standard of the R package AalenJohansen, see~\cite{BladtFurrer2023}. In Figure~\ref{fig3}, we plot the resulting estimates (dashed) and true values (solid) for $j=2$. We also include the former landmark estimator for reference.

\begin{figure}[h!]
\centering
\includegraphics[width=0.48\textwidth]{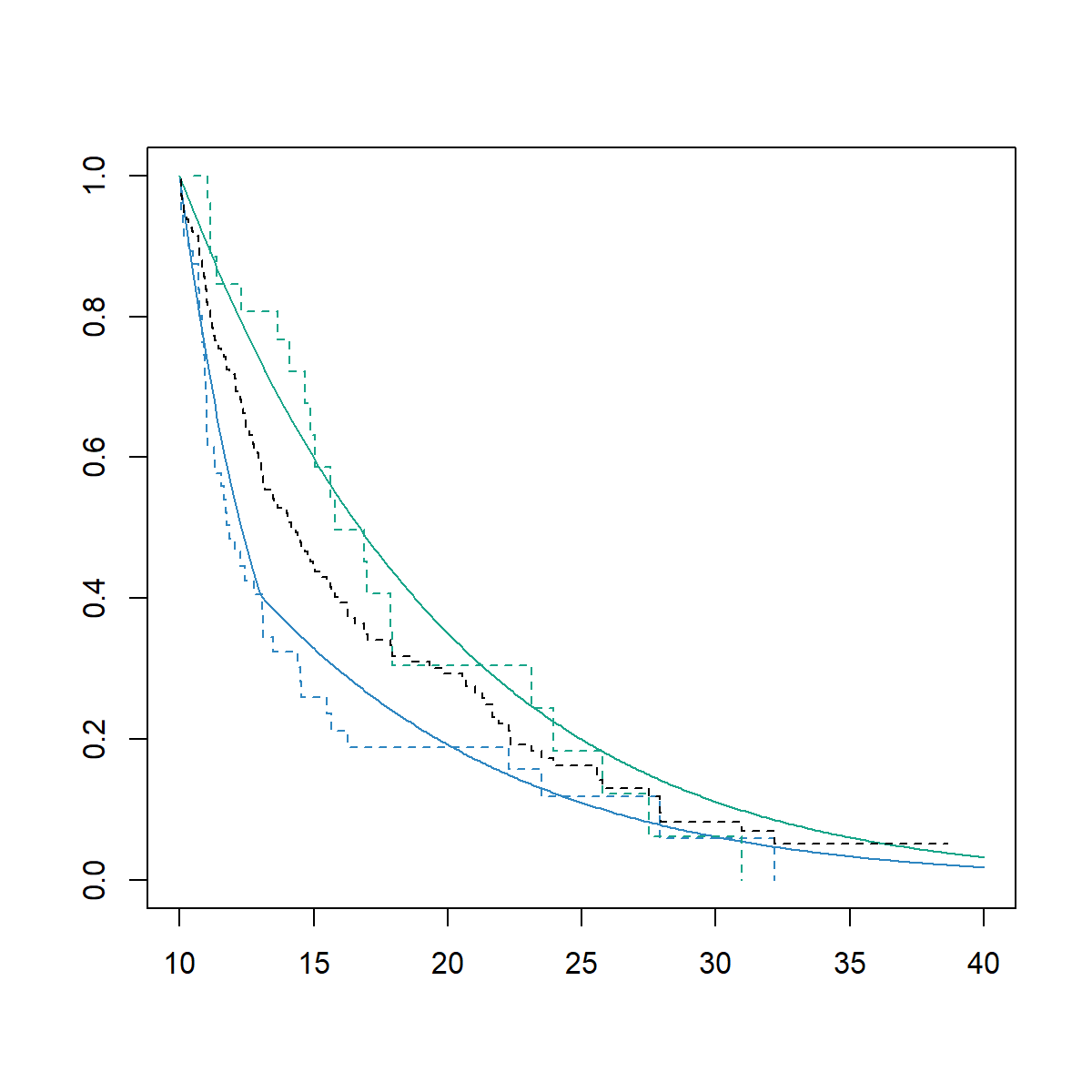}
\includegraphics[width=0.48\textwidth]{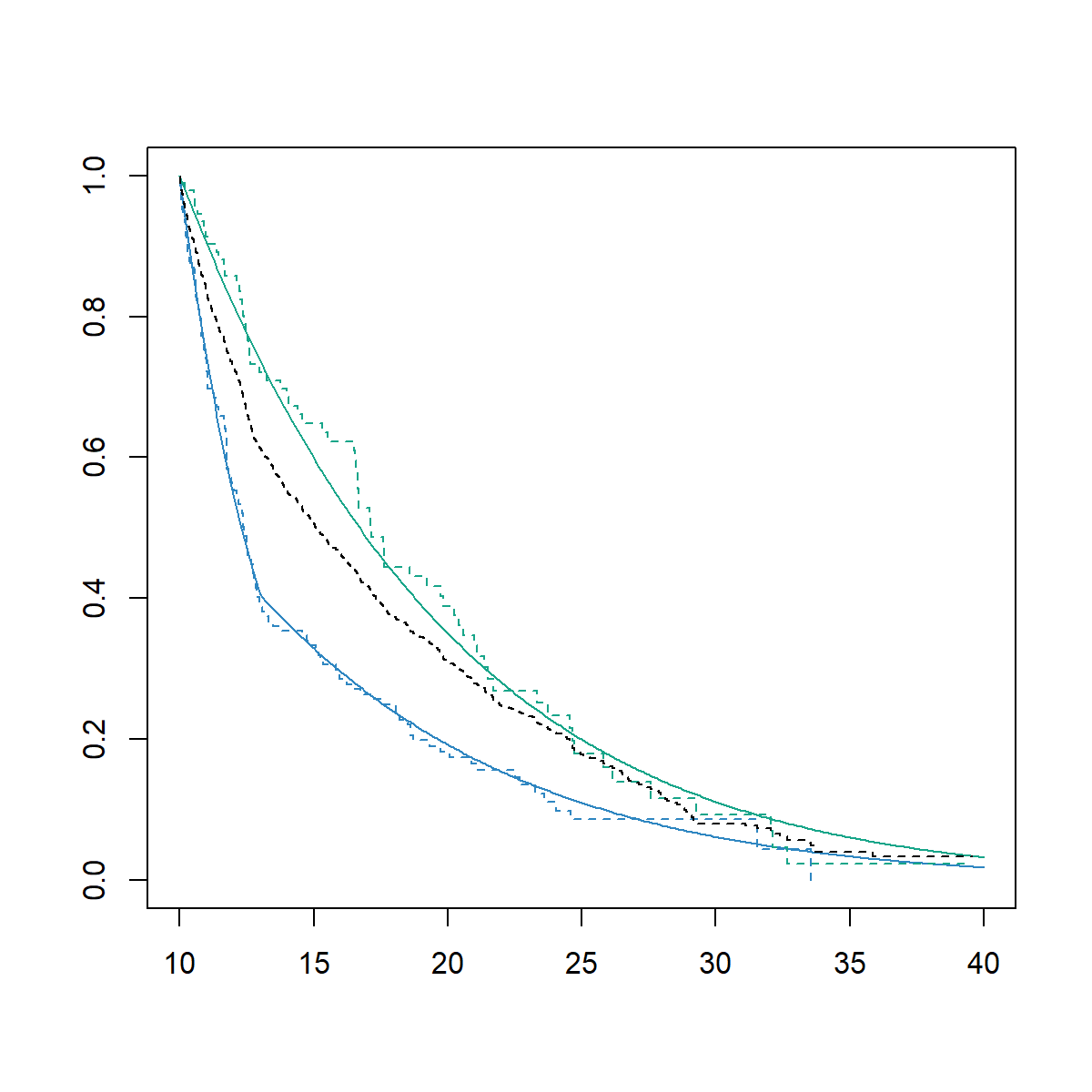}
\caption{Estimates of the transition probability with $j=2$ (dashed) for $u=1$ (blue), $u=5$ (green), and without conditioning on duration (black) for $n=1,000$ (left) and $n=5,000$ (right), respectively. True values are in solid.}
\label{fig3}
\end{figure}

An obvious comment is that with increasing accuracy, that is with additional conditioning, the volatility of the estimator increases due to reduced effective sample size. In general, we observe that we are far away from the asymptotic case. Consequently, it is of great importance to understand, based on the application, which type of transition probability (conditional occupation probability) is required. The conditional Aalen--Johansen estimator allows for the inclusion of non-categorical conditioning variables, such as the current duration. Its greatest strength lies in its capacity to unveil non-Markov behavior, such as identifying a clear duration effect, compare with Figure~\ref{fig3}, but also further non-Markov effects. Beyond its relevance as an exploratory tool, its potential for statistical inference in real world applications ultimately depends on the bias-variance trade-off at hand, which is primarily driven by the size of the sample.}

\end{document}